\newcommand{\C}{\mathbb{C}}
\newtheorem{theorem}{Theorem}[section]
\newtheorem{corollary}[theorem]{Corollary}
\newtheorem{definition}[theorem]{Definition}
\newtheorem{example}[theorem]{Example}
\newtheorem{lemma}[theorem]{Lemma}
\newtheorem{proposition}[theorem]{Proposition}
\newtheorem{remark}[theorem]{Remark}
\newenvironment{proof}[1][Proof]{\noindent\textbf{#1.} }{\ \rule{0.5em}{0.5em}}
\title{\textbf{On the fifth Whitney cone of a complex analytic curve}}
\author{ \ \ \ \\{A. Giles Flores, $\ $ $\ \ $ O.N. Silva $\ $ and $\ \ $ J. Snoussi}}
\date{}
\begin{document}

\maketitle

\begin{abstract}
From a procedure to calculate the  $C_5$-cone of a reduced complex analytic curve $X \subset \mathbb{C}^n$ at a singular point $0 \in X$, we extract a collection of integers that we call {\it auxiliary multiplicities} and we prove they characterize the Lipschitz type of complex curve singularities.  We then use them to improve the known bounds for the number of irreducible components of the $C_5$-cone.  We finish by giving 
an example showing that in a Lipschitz equisingular family of curves the  
number of planes in the $C_5$-cone may not be constant.
\end{abstract}

\section{Introduction}\label{introduction}

$ \ \ \ \ $ In \cite[Sec. 3]{whitney}, Whitney introduced some spaces that are now known in the literature as Whitney cones. Given an analytic set $W$ in $\mathbb{C}^n$ and a point $p\in W$, Whitney defined six types of cones, $C_1(W,p), C_2(W,p),\cdots,C_6(W,p)$, all of them having the point $p$ as vertex. In this work we will deal with the cones $C_3(W,p)$, $C_4(W,p)$ and $C_5(W,p)$ and for simplicity we will suppose that $p$ is the origin. Roughly speaking, the cone $C_3(W,0)$, known as the Zariski tangent cone, is the set of limit positions of secants of $W$ passing through $0$. The cone $C_4(W,0)$ is the set of all limits of tangent vectors to $X$ at $0$. Finally the cone $C_5(W,0)$ is the set of all limit positions of bi-secants, {\it i.e.}, limits of lines passing through a couple of points both converging to $0$; see Section \ref{sec2}, and also \cite[p. 92]{chirka} for precise definitions.

Whitney cones have proven to be very useful in singularity theory. For instance, in \cite[Sec. 1 and 2]{stutz} Stutz gives conditions for the weak (respectively, strong) equisingularity of a family $(W,0)$ of germs of complex analytic sets in terms of the dimensions of the cones $C_4(W,0)$ and $C_5(W,0)$. Also, if $(X,0)$ is a germ of singular reduced curve, then the cone $C_5(X,0)$ determines the set of all projections of $(X,0)$ to $(\mathbb{C}^2,0)$ such that the image is a curve with minimal Milnor number (see \cite[Prop. IV.2]{briancon}). 

It is well known that if $I \subset \mathbb{C}\{X_1, \ldots , X_n\}$ is a defining ideal of an analytic germ 
$(W,0)\subset (\mathbb{C}^n,0)$, then by using standard bases, one can find generators $f_1, \ldots , f_k$ of the ideal $I$ 
such that the ideal generated by the initial forms $In_0(f_i), i=1, \ldots , k$, is a defining ideal of the cone $C_3(W,0)$. 
In particular this gives the $C_3$-cone an algebraic structure. Whitney also provided the cones $C_4$ and $C_5$ with 
an algebraic structure; see \cite[Th. 5.6]{whitney}. However there is no known canonical method to determine defining
equations for these cones in all dimensions. 

In the case where $(X,0)$ is a germ of curve singularity, limits of secants and limits of tangents coincide. Consequently the sets 
$C_3(X,0)$ and $C_4(X,0)$ are the same, and these are a finite union of lines that can be determined directly either 
from the equations of $(X,0)$ or from the parametrizations of each of its branches.  

In 1972, Stutz showed that if $(X,0)$ is an analytic singular curve in $(\mathbb{C}^n,0)$, then $C_5(X,0)$ has dimension $2$ 
(\cite[Lem. 3.15]{stutz}). Some years later, Brian\c{c}on, Galligo and Granger showed that if $(X,0)$ is a singular 
germ of reduced curve, then $C_5(X,0)$ is a finite union of planes, each of them containing at least one 
tangent of $(X,0)$ (\cite[Prop. IV.1]{briancon}). 

In 2002, Krasi\'nski, gave in \cite{Krasinsky} formulas to determine the planes of the 
$C_5$-cone of a curve, starting from the parametrizations of each of 
its branches. He also gave a bound on the number of these planes.

In this work, we start by describing the procedure given by 
Krazi\'nski in \cite{Krasinsky}. We call it the ``$C_5$-procedure'' and present it in  Theorem \ref{generalcase1}.

The main idea is the following: For each branch $(X^{(i)},0)$ of 
$(X,0)$ of multiplicity $m^{(i)}$ and for each pair of branches $(X^{(j)},0)$ 
and $(X^{(k)}, 0)$ of $(X,0)$ of multiplicities $m^{(j)}$ and $m^{(k)}$,  
we construct a collection of analytic maps 
depending on the $m^{(i)}$-th roots of unity and the Puiseux
parametrization of $(X^{(i)},0)$ on one hand, and on the $m^{(k)}$-th roots of unity and the parametrizations of $(X^{(j)},0)$ and $(X^{(k)},0)$ on the other hand suposing
that $m^{(k)} \leq m^{(j)}$.  We call 
these maps {\it auxiliary parametrizations} associated to the curve $(X,0)$.
The images of these maps are a collection of curves
that we call {\it auxiliary curves} associated to $(X,0)$. When $(X,0)$ is a singular curve, the tangent lines of the auxiliary curves, along with the tangent lines to the branches of $(X,0)$ give rise to a finite union of planes which form the cone $C_5(X,0)$.  
We give precise definitions of the auxiliary curves and parametrizations in Section \ref{preliminaries}.

This strategy was already at the heart of the proof, by 
Brian\c{c}on, Galligo and Granger in \cite[Prop. IV.1]{briancon}, 
that the $C_5$-cone is a finite union of planes.  
Since it doesn't seem to be widespread knowledge among the community
we chose to present an explicit procedure giving a precise definition of the planes of the $C_5$-cone of a curve.  This is the aim of the first part of this work and it is explained in Sections \ref{preliminaries} and \ref{c5cone}.

In a second step, we exhibit from the auxiliary parametrizations a collection of integers that we call auxiliary multiplicities. We show that they determine the bi-Lipschitz type of the curve singularity. 

These numbers were first used by Pham and Teissier in \cite{pham} 
to study the saturation of local analytic algebras of dimension one.
They show in particular that for plane curve singularities, the auxiliary multiplicities determine the topological type of the curve. We recall these results in the first part of Section \ref{sec4}. 
  
Neumann and Pichon also used the auxiliary multiplicities to prove 
a result by Teissier stating that,  for a complex curve germ $(X,0) 
\subset (\mathbb{C}^n,0)$, the restriction to $X$ of a generic linear projection to $\mathbb{C}^2$ is bi-Lipschitz for the outer geometry \cite[Th. 5.1]{pichon}. In their work, they call these numbers \textit{``essential integer exponents''}.

Also in Section \ref{sec4}, we show that for a reduced curve singularity $(X,0)$,  
the auxiliary multiplicities are bi-Lipschitz invariants of $(X,0)$ for the 
outer metric.  More precisely, we show that two germs of curves $(X,0)$ and $(Y,0)$ in $(\mathbb{C}^n,0)$ are bi-Lipschitz equivalent if and only if there exists a bijection between its branches preserving all the auxiliary multiplicities (Theorem \ref{bilipequivalence}). 
 
 In Section \ref{bounds}, we address the question of the number of planes that appear in the $C_5$-cone of a curve singularity $(X,0)$. Using our previous results
 we are able to improve the known upper bounds for the number of irreducible components of $C_5(X,0)$  in  propositions \ref{bound2} and \ref{bound3}.

In the case of the $C_3$-cone of curves, it is known that the number of 
irreducible components of the cone need not be constant in Whitney equisingular families of curves, see \cite[Ex. 4.13]{otoniel6}. It has also been proved that in bi-Lipschitz equisingular 
families of curves, the number of irreducible components of the $C_3$-cone is constant, see \cite{otoniel6} and also \cite{edson} for a more general situation. Thus, we have a natural question: Is the number of planes of $C_5(X,0)$ a bi-Lipschitz invariant for a curve $(X,0)$? In Section \ref{number}, we give a negative answer to this question
with an example of a Lipschitz regular family of curves where the number 
of planes in the $C_5$-cone is not constant. We finish by using this to construct curves which are bi-Lipschitz equivalent but not analytically equivalent.\\ 
 
Before starting, we will establish some notations that will be used throughout this work.\\

$\bullet$ Unless stated otherwise, $W$ denotes an arbitrary analytic set in $\mathbb{C}^n$ and we will denote the non-singular locus of $(W,0)$ by $reg(W,0)$. On the other hand, $(X,0)$ and $(Y,0)$ will denote germs of reduced analytic (singular or smooth) curves in $(\mathbb{C}^n,0)$, with $n\geq 2$.

$\bullet$ When $(X,0)$ and $(Y,0)$ are germs of plane curves, the number $i(X,Y)$ denotes the intersection multiplicity of $(X,0)$ and $(Y,0)$.

$\bullet$ $(X^{(i)},0)$ denotes an irreducible component, or a branch of a curve $(X,0)$. A parametrization of $(X^{(i)},0)$ will be denoted by $\varphi^{(i)}=(\varphi^{(i)}_1,\cdots, \varphi^{(i)}_n)$.

$\bullet$ $m(X,0)$ or $m$ (for short) denotes the multiplicity of $(X,0)$ and $m^{(i)}$ denotes the multiplicity of $(X^{(i)},0)$.
 
$\bullet$ $G_m$ denotes the cyclic group with $m$ elements ($m$-roots of unity). If $\theta \in G_m$, then $ord(\theta)$ denotes the order of $\theta$ in $G_m$. 

$\bullet$ Given $\varphi(u)=\sum a_i u^i$ a non-zero element in $\mathbb{C}\lbrace u \rbrace$, the ring of convergent power series in the variable $u$, $ord_0(\varphi)$ denotes the order of $\varphi$ at $0$, i.e., the minimum integer $n$ such that $a_n \neq 0$. Set $ord_0(0)=\infty$.

\section{Preliminaries}\label{preliminaries}

\subsection{The Whitney cones $C_3$, $C_4$ and $C_5$}\label{sec2}

$ \ \ \ \ $ Let $W$ be an analytic set in $\mathbb{C}^n$, and $p \in W$. As stated in the introduction, for simplicity we will assume that the point $p$ is the origin. Following \cite[p. 91]{chirka}, we start by defining the cones $C_3(W,0)$, $C_4(W,0)$ and $C_5(W,0)$; throughout this work we will only deal with these three cones. 

\begin{definition} Let $v$ be a vector in $\mathbb{C}^n$.\\

\noindent (a) We say that $v \in C_3(W,0)$ if there exist a sequence of points $(w_s) \in W$ 
and a sequence of complex numbers $(\lambda_s)$ such that $(w_s)\rightarrow 0$ and $(\lambda_s w_s)\rightarrow v$ as $s\rightarrow \infty$.\\

\noindent(b) We say that $v \in C_4(W,0)$ if there are sequences of points $(w_s) \in reg(W)$ and vectors $(v_s) \in T_{w_s}W$ such that $(w_s) \rightarrow 0$ and $(v_s)\longrightarrow v$ as $s\longrightarrow \infty$.\\

\noindent(c) We say that $v \in C_5(W,0)$ if there are distinct sequences of points $(w_s),(w_s') \in W$ and numbers 
$(\lambda_s) \in \mathbb{C}$ such that $(w_s)\rightarrow 0$, $(w_s')\rightarrow 0$ 
and $\lambda_s(w_s-w_s')\rightarrow v$ as $s\rightarrow \infty$. 
\end{definition}

\begin{remark}\label{coneremark} \noindent (a) The cone $C_3(W,0)$ is made of lines through $0$ obtained as limits of secant lines with direction $\overline{w_s-0}$ where $(w_s)\subset W\setminus \{0\}$ is a sequence of points converging to $0$. The cone $C_4(W,0)$ is the union of all limits of tangent spaces $T_{w_s}W$ to $W$ at non-singular points $w_s$ converging to $0$. 
The cone $C_5(W,0)$ is the set of all the lines obtained as limits of secant lines with direction $\overline{w_s - w'_s}$ 
through distinct sequence of points $w_s$ and $w'_s$ of $W$, both converging to $0$.\\ 

\noindent (b) Whitney provided the cones $C_3, C_4$ and $C_5$ with an algebraic  structure (see \rm\cite[\textit{Sec}. 5]{whitney})\textit{, making them possibly non-reduced algebraic spaces. However, in this work we are only interested in their reduced structure, or their set theoretic construction.}\\ 

\noindent \textit{(c) The cone $C_3(W,0)$ is sometimes called Zariski's tangent cone. It is a well described and widely studied space, see for example} \rm\cite[\textit{Ch.} 2]{chirka}. \textit{In the case of a germ of curve $(X,0)$ in $(\mathbb{C}^n,0)$, 
we know that limits of secants through $0$ and limits of tangents of $(X,0)$ at $0$ coincide; see for example} \rm\cite[\textit{Prop.} 2.3.4]{jsnoussi-LNM}. \textit{Hence, the cones $C_3(X,0)$ and $C_4(X,0)$, both provided with the reduced structure, are the same space. Therefore, the next natural step is to study the $C_5$-cone of $X$.}\\

\noindent \textit{(d) We have that $C_3(W,0)\subset C_4(W,0) \subset C_5(W,0)$ and $C_i(W^{(1)} 
\cup W^{(2)},0) = C_i(W^{(1)},0) \cup C_i(W^{(2)},0)$ for $i=3,4$. However, in general}

\begin{center}
 $  C_5(W^{(1)},0) \cup C_5(W^{(2)},0) \subseteq C_5(W^{(1)} \cup W^{(2)},0)$,
 \end{center} 

\noindent \textit{and this inclusion can be proper, as we will see in Example} \rm\ref{exe2}.

\end{remark}

The following Theorem is due to Brian\c{c}on, Galligo and Granger.

\begin{theorem}\label{BGG}\rm(\cite{briancon}, \textit{Th. $IV.1$}\rm) \textit{If $X$ is an analytic singular reduced complex curve, then $C_5(X,0)$ is a finite union of planes, each of them containing at least one tangent to $X$.}
\end{theorem}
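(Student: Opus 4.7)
The plan is to decompose $(X,0)$ into its finitely many irreducible branches and reduce the computation of $C_5(X,0)$ to a branch-by-branch analysis by means of the Puiseux parametrizations. Write $X=\bigcup_{i=1}^r X^{(i)}$ with normalizations $\varphi^{(i)}\colon(\mathbb{C},0)\to(X^{(i)},0)$. Passing to a subsequence, any two sequences $(w_s),(w_s')\subset X$ realizing a vector in $C_5(X,0)$ may be assumed to lie in fixed branches $X^{(i)}$ and $X^{(j)}$; hence
\[
C_5(X,0)=\bigcup_{1\le i,j\le r} C_5^{i,j}(X,0),
\]
where $C_5^{i,j}(X,0)$ collects the limits $\lim\lambda_s\bigl(\varphi^{(i)}(t_s)-\varphi^{(j)}(t_s')\bigr)$ with $t_s,t_s'\to 0$ (and the pair distinct). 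Since the union is finite, it is enough to prove that each $C_5^{i,j}(X,0)$ is a finite union of planes through at least one tangent of $X$.

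Next, I would pass to the ratio $\eta_s := t_s'/t_s$; by compactness of $\mathbb{P}^1$ and a further subsequence, $\eta_s\to\eta_0\in\mathbb{P}^1$. Up to swapping the roles of $(t_s)$ and $(t_s')$ (the other affine chart of $\mathbb{P}^1$), assume $\eta_0\in\mathbb{C}$ and consider the analytic map
\[
\Phi(t,\eta):=\varphi^{(i)}(t)-\varphi^{(j)}(\eta t)=\sum_{k\ge 1}\bigl(a^{(i)}_k-\eta^k a^{(j)}_k\bigr)\,t^k,
\]
where $a^{(i)}_k\in\mathbb{C}^n$ is the $k$-th Puiseux coefficient vector of $\varphi^{(i)}$ and $a^{(i)}_{m^{(i)}}$ spans the tangent line of $X^{(i)}$. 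For $s$ large, the projectivized secant $[\Phi(t_s,\eta_s)]\in\mathbb{P}^{n-1}$ is governed by the lowest power of $t$ appearing with nonzero coefficient in $\Phi(t,\eta_0)$.

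For generic $\eta$, this leading term produces directions lying on a single projective line of $\mathbb{P}^{n-1}$, corresponding to a plane in $\mathbb{C}^n$: if $m^{(i)}=m^{(j)}=m$, the leading term $(a^{(i)}_m-\eta^m a^{(j)}_m)t^m$ sweeps the line through $[a^{(i)}_m]$ and $[a^{(j)}_m]$, that is, the plane spanned by the two tangents; if $m^{(i)}<m^{(j)}$ it gives only the tangent of $X^{(i)}$; and if $i=j$ the common tangent of $X^{(i)}$. The exceptional values of $\eta$ — the finite zero locus of the leading coefficient polynomials, which reduces to $G_{m^{(i)}}$ when $i=j$ — cause the $t$-order of $\Phi(t,\eta)$ to jump and reveal new limit directions. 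At each such $\eta_0$, localize by writing $\eta=\eta_0+u$ and repeat the expansion of $\Phi(t,\eta_0+u)$; this produces another analytic pencil of directions on a projective line, hence another plane in $\mathbb{C}^n$. Iterating yields precisely the \emph{auxiliary parametrizations} announced in the introduction.

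Finiteness follows because at each iteration the $t$-order strictly increases while remaining bounded by the Puiseux data of the branches, so the process terminates, and because the bad locus at each step is a finite set of polynomial zeros in $\eta$. The tangency property holds by construction: the generic member of every analytic one-parameter family of directions produced at a bad value $\eta_0$ is a tangent direction of one of the branches, and the corresponding plane is spanned by this tangent and the new direction obtained at $\eta_0$. The main obstacle is the rigorous bookkeeping of the iteration: one has to control how the $t$-orders and the leading polynomial coefficients evolve after each localization at a bad $\eta_0$, show that each step adds only finitely many new planes, and verify that every such plane truly contains a tangent line. This is precisely the content of the explicit $C_5$-procedure of Theorem \ref{generalcase1}.
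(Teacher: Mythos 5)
Your overall strategy is the one the paper follows: reduce to pairs of branches, compactify the ratio of the Puiseux parameters, observe that away from finitely many exceptional ratios the secant direction degenerates to a tangent, and at each exceptional ratio extract one new direction which, together with a tangent, spans a plane; the paper's version of your ``localization at $\eta_0$'' is the comparison of $ord_0(\alpha_1)$ with $d\,k_\theta$ in Lemma \ref{irreduciblecase}, and the realization of the \emph{whole} plane (needed for equality, not just inclusion) is its explicit choice $f(u)=u$, $g(u)=\theta u+\frac{\lambda\theta}{m}(1-\theta^{k_\theta})u^{k_\theta-m+1}$. For $i=j$, and for two branches of equal multiplicity, your analysis is essentially Lemmas \ref{irreduciblecase}--\ref{notangentcase}.

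There is, however, a genuine gap whenever $m^{(i)}\neq m^{(j)}$: the ratio $\eta_s=t'_s/t_s$ is the wrong variable. In $\Phi(t,\eta)=\sum_k(a^{(i)}_k-\eta^k a^{(j)}_k)t^k$ with $m^{(i)}<m^{(j)}$ the coefficient of the lowest power $t^{m^{(i)}}$ is $a^{(i)}_{m^{(i)}}$, independent of $\eta$, so your ``finite zero locus of the leading coefficient polynomials'' is empty in $\mathbb{C}^*$; the only exceptional value is $\eta_0=0$ (or $\infty$ before swapping), and localizing there by $\eta=u$ just returns the original two-parameter family $\varphi^{(j)}(t)-\varphi^{(i)}(ut)$ --- not ``another analytic pencil on a projective line'' --- so the iteration makes no progress. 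The bisecants you are missing have $t'_s\sim\zeta\, t_s^{m^{(i)}/m^{(j)}}$; they produce the plane $H^{(i,j)}$ spanned by the two tangents in the non-tangent case, and the contact planes $H^{(i,j)}_\theta$ in the tangent case (for the pair $(1,2)$ of Example \ref{exe2}, multiplicities $4$ and $6$, these are the two planes $V(y)$ and $V(y+2z)$, which your scheme as written would not detect). The repair is exactly the paper's preliminary reparametrization $u\mapsto u^{\tilde m^{(i)}}$ equalizing both leading exponents to $m^{(i,j)}=\mathrm{lcm}(m^{(i)},m^{(j)})$ (Definition \ref{auxdef}(b), Lemma \ref{tangentcase}); after that the exceptional ratios are exactly $G_{m^{(i,j)}}$ and your one-variable localization goes through. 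A minor wording correction: at a bad $\eta_0$ the \emph{generic} member of the pencil is $v_\theta+\lambda w$, which is not a tangent direction; tangency of the plane holds because the tangent $w$ is one (special) member of the pencil.
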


We would like to describe more precisely how to find the planes of the cone $C_5(X,0)$. Inspired by the proof of Theorem \ref{BGG} in \cite[IV.1]{briancon}, and
using the results of \cite{Krasinsky} we will describe a procedure to build the $C_5$-cone of a curve. For that purpose, in the next section, we introduce the notion of {\it auxiliary curves} associated to $(X,0)$ that will play a fundamental role in what follows. 

\subsection{Auxiliary parametrizations, curves and multiplicities}\label{seccurvasaux}

$ \ \ \ \ $ Starting from the local parametrizations of the branches of a germ of complex analytic curve $(X,0)$, 
or equivalently, from its normalization, we will produce a collection of curves whose tangent lines allow us to 
give a precise description of the cone $C_5(X,0)$. 
Furthermore, multiplicities associated to these curves will happen to be bi-Lipschitz invariants of the original curve.

\begin{definition}
(a) Let $(X,0) \subset (\mathbb{C}^n,0)$ be a germ of irreducible 
and reduced curve. 
A \textit{parametrization} of $(X,0)$ is a finite holomorphic map germ:

\begin{center}
$\varphi:(\mathbb{C},0)\rightarrow (\mathbb{C}^n,0)$, $ \ \ \ $ $u\mapsto (\varphi_1(u),\cdots,\varphi_n(u))$
\end{center}

\noindent with $\varphi(\mathbb{C},0) = (X,0)$. 

If in addition, $\varphi$ satisfies the following factorization property: ``Each finite holomorphic map germ $\psi:(\mathbb{C},0)\rightarrow (\mathbb{C}^n,0)$, such that $\psi(\mathbb{C},0) = (X,0)$, factors in a unique way through $\varphi$, that is, there exists a unique holomorphic map germ $\widehat{\psi}:(\mathbb{C},0)\rightarrow (\mathbb{C},0)$ such that $\psi= \varphi \circ \widehat{\psi}$'', then we say that $\varphi$ is a \textit{primitive parametrization} of $(X,0)$. 

If $(X,0)=(X^{(1)}\cup \cdots \cup X^{(r)},0)$, then a parametrization of $(X,0)$ is a system of parametrizations $\lbrace \varphi^{(1)},\cdots, \varphi^{(r)} \rbrace$ of the branches $(X^{(i)},0)$.\\

\noindent (b) Let $(X,0)$ be a germ of irreducible curve in $(\mathbb{C}^n,0)$ with multiplicity $m$. 
We say that a primitive parametrization $\varphi: (\mathbb{C},0)\rightarrow (X,0)$ is a Puiseux parametrization of $(X,0)$ 
if $\varphi$ has the following form:

\begin{center}
 $\varphi(u)=(\varphi_1(u),\cdots,\varphi_n(u))= \left(\displaystyle { \sum_{i\geq m}^{}}a_{1,i}u^i, \   \cdots, \  \displaystyle { \sum_{i\geq m}^{}}a_{j-1,i}u^i, \ u^m \ , \displaystyle { \sum_{i\geq m}^{}}a_{j+1,i}u^i, \ \cdots \ , \ \displaystyle { \sum_{i\geq m}^{}}a_{n,i}u^i \right).$
 \end{center} 

\noindent In this case, the $j$-th coordinate is called a {\it special coordinate} for the parametrization $\varphi$. If $(X,0)=(X^{(1)}\cup \cdots \cup X^{(r)},0)$, then a Puiseux parametrization of $(X,0)$ is 
a system of Puiseux parametrizations $\lbrace \varphi^{(1)},\cdots, \varphi^{(r)} \rbrace$ of the branches $(X^{(i)},0)$.

We say that a system of Puiseux parametrizations $\lbrace \varphi^{(1)},\cdots, \varphi^{(r)} \rbrace$ is compatible 
if for any pair $(i,j)$ such that $X^{(i)}$ is tangent to $X^{(j)}$ the respective primitive Puiseux parametrizations of $X^{(i)}$ and $X^{(j)}$ have a common special coordinate (not necessarily with the same power ``$m$'').\\

\noindent (c) Suppose that $(X,0)=(X^{(1)}\cup \cdots \cup X^{(r)},0)$ and define the following sets:

\begin{flushleft}
$-$ $S(X)$ is the set of indices of singular branches, {\it i.e.,} the subset of $\lbrace 1,\cdots, r \rbrace$ defined by: 
$i \in S(X)$ if and only if $(X^{(i)},0)$ is singular.

$-$ $T(X)$ is the set of pairs of indices of tangent branches, {\it i.e.,} the set of pairs $(i,j)$ with $i<j$ such that 
$(X^{(i)},0)$ is tangent to $(X^{(j)},0)$.

$-$ $NT(X)$ is the set of pairs of indices of non-tangent branches, {\it i.e.}, pairs $(i,j)$ with $i<j$ such that 
$(X^{(i)},0)$ is not tangent to $(X^{(j)},0)$.
\end{flushleft}

\end{definition}

\begin{remark}\label{remarkstandard} It is well known that if $(X,0)$ is irreducible and  $\varphi$ is an arbitrary primitive
parametrization of $(X,0)$, then there is an analytic isomorphism $\xi:(\mathbb{C},0)\rightarrow (\mathbb{C},0)$ such that 
$\varphi  \circ \xi$ is a Puiseux parametrization of $(X,0)$. Therefore, given a curve $(X,0)$, we can always choose a 
Puiseux parametrization for each branch of $(X,0)$ (see for instance \rm\cite[\textit{p.} \rm 98]{chirka}). \textit{Furthermore, it is not hard to see that a compatible system of Puiseux parametrizations for a curve always exists. 
Consider a germ of curve $(X,0)=(X^{(1)} \cup X^{(2)},0)$ in $(\mathbb{C}^n,0)$, with two tangent irreducible components. 
Consider a system of Puiseux parametrizations $\lbrace \varphi^{(1)},\varphi^{(2)}\rbrace$ for $(X,0)$, defined as}

\begin{center}
$\varphi^{(1)}(u)= (\varphi^{(1)}_1, \ldots , \varphi^{(1)}_n)$ $ \ \ \ $ and $ \ \ \ $ $\varphi^{(2)} = (\varphi^{(2)}_1, \ldots , \varphi^{(2)}_n)$.
 \end{center} 

\textit{The system of parametrizations of $X$ is compatible if and only if there exists $j\in \{ 1, \ldots n\}$ such that}

\begin{center} 
$\varphi ^{(1)}_j (u) = u^{m^{(1)}}$ $ \ \ \ $ and $ \ \ \ $  $\varphi^{(2)}_j(u) = u^{m^{(2)}}$,
\end{center} 

\noindent \textit{where the integers $m^{(1)}$ and $m^{(2)}$ are respectively the lowest orders among the $\varphi^{(1)}_i$'s and $\varphi^{(2)}_i$'s, respectively. }

\textit{Such an index $j$ is a common special coordinate for $\varphi^{(1)}$ and $\varphi^{(2)}$.}

\end{remark}

We are now able to present our main definition. Given a finite analytic map germ $f:(\mathbb{C},0)\rightarrow (\mathbb{C}^n,0)$, it is well-known that the image of $f$ is 
an analytic set. However, we can consider different analytic structures on the image of $f$ 
(see for instance \rm\cite[p. 48]{greuel6}). In the next definition, we will define some curves as images of finite maps 
and we will adopt the reduced structure for these images. 

\begin{definition}(Auxiliary curves and multiplicities)\label{auxdef}
Let $(X,0)=(X^{(1)} \cup \cdots \cup X^{(r)},0)$ be a germ of reduced curve in $(\mathbb{C}^n,0)$.\\

\noindent \textbf{(a)} Suppose that $r=1$ so that $(X,0)=(X^{(1)},0)$ is irreducible and singular. 
Define $m^{(1)}:=m(X^{(1)},0)$ to be its multiplicity. 
Let $\varphi^{(1)}=(\varphi^{(1)}_1 ,\cdots,\varphi^{(1)}_n)$ be a 
Puiseux parametrization  of $(X^{(1)},0)$. For each $m^{(1)}$-th root of unity $\theta \neq 1 \in G_{m^{(1)}}$:\\

{(a.1)} Define the map:
$$\begin{array}{rcl}
\phi^{(1)}_{\theta} :(\mathbb{C},0) & \rightarrow & (\mathbb{C}^n,0)\\
u & \mapsto &\varphi^{(1)}(u)-\varphi^{(1)}(\theta u),
\end{array}$$
we will call it an auxiliary characteristic parametrization associated to 
$(X^{(1)},0)$. The image of $\phi^{(1)}_{\theta}$ will be called
an auxiliary characteristic curve associated to $(X^{(1)},0)$ and denoted 
by $(A^{(1)}_{\theta}(X),0)$.

{(a.2)} Define the $\mathbb{C}$-vector space $H^{(1)}_{\theta}$ to be the linear space generated by non-zero vectors in $C_3(X,0)$ 
and $C_3(A^{(1)}_{\theta}(X),0)$.

{(a.3)} Define $m^{(1)}_{\theta}:={\displaystyle min_j} \lbrace \ ord_0 [\varphi^{(1)}_j(u)-\varphi^{(1)}_j(\theta u)] \ \rbrace $. The collection 
$\lbrace m^{(1)} \rbrace \cup \lbrace m^{(1)}_{\theta} \ | \ \theta \in G_{m^{(1)}}\}$ 
is called the characteristic auxiliary multiplicities associated to $(X^{(1)},0)$. We will 
denote it by $ChAM(X^{(1)},0)$. \\

\noindent \textbf{(b)} Suppose that $(X,0)=(X^{(1)} \cup X^{(2)},0)$, set $m^{(i)}=m(X^{(i)},0)$ to be their respective multiplicities and 
suppose that $m^{(1)} \geq m^{(2)}$.

\noindent Choose compatible Puiseux parametrizations $\varphi^{(1)}$ and 
$\varphi^{(2)}$ for $(X^{(1)},0)$ and $(X^{(2)},0)$, respectively. Then, for each $\theta \in G_{m^{(2)}}$:\\

(b.1) Define the map:
$$\begin{array}{rcl}
\phi^{(1,2)}_{\theta} :(\mathbb{C},0) & \rightarrow & (\mathbb{C}^n,0)\\
u & \mapsto & \varphi^{(1)}( u^{ m^{(2)}})- \varphi^{(2)} ( \theta u^{m^{(1)}}),
\end{array}$$
and call it a contact auxiliary parametrization associated to $(X^{(1)},0)$ and $(X^{(2)},0)$. The image of the map 
$\phi^{(1,2)}_{\theta}$ will be called a contact auxiliary curve associated to $(X^{(1)},0)$ and $(X^{(2)},0)$, and denoted
by $(A^{(1,2)}_{\theta}(X),0)$.

(b.2) Let $w_1,w_2,w_{(1,2)}$ be non-zero vectors in $C_3(X^{(1)},0)$, $C_3(X^{(2)},0)$ and $C_3(A_{\theta}^{(1,2)}(X),0)$, respectively. Define the $\mathbb{C}$-vector space $H_{\theta}^{(1,2)}$ to be the linear space generated by $w_1,w_2,w_3$.

(b.3) Define $m^{(1,2)}_{\theta}:={\displaystyle min_j} \lbrace \ ord_0 [ \varphi^{(1)}_j( u^{ m^{(2)}})- \varphi^{(2)}_j( \theta u^{m^{(1)}})] \ \rbrace $. 
The ordered sequence of integers, with possible repetitions,  
$(m^{(1,2)}_{\theta_1}\leq \ldots \leq m^{(1,2)}_{\theta_{m^{(2)}}}, \theta_i\in G_{m^{(2)}})$ 
is called the contact auxiliary multiplicities 
of the pair $((X^{(1)},0),(X^{(2)},0))$. It will be denoted by $CoAM(X^{(1)}, X^{(2)},0)$.\\

\noindent \textbf{(c)} In general, suppose that $(X,0)=(X^{(1)}\cup \cdots \cup X^{(r)},0)$. Choose a compatible system of Puiseux parametrizations $\lbrace \varphi^{(1)},\cdots, \varphi^{(r)} \rbrace$ of $(X,0)$, set $m^{(i)}=m(X^{(i)},0)$ and suppose  $m^{(1)} \geq m^{(2)} \geq \cdots \geq m^{(r)}$. 

An auxiliary multiplicity of $(X,0)$ is either a characteristic auxiliary multiplicity associated to a branch, denoted by $m^{(i)}_{\theta}$, or a contact 
auxiliary multiplicity associated to a pair of branches, denoted by $m^{(i,j)}_{\theta}$.

In a similar way, an auxiliary curve of $(X,0)$ is a branch which is either a characteristic auxiliary curve associated to a branch of 
$(X,0)$, denoted by $A_{\theta}^{(i)}(X,0)$, or a contact auxiliary  curve associated to a pair of branches of $(X,0)$, denoted by $A_{\theta}^{(i,j)}(X,0)$. For each $(X^{(i)},0)$ (respectively, for each pair $(X^{(i)} \cup X^{(j)},0)$, with $i<j)$ we define the spaces $H^{(i)}_{\theta}$ (respectively, $H_{\theta}^{(i,j)}$) as in (a) and (b).

\end{definition}

\begin{remark}\label{remarkpham} (a) The terminology we use in this definition will be justified by Lemmas \rm\ref{lemma1},
 \ref{lemma2}, \ref{lemma3}, \textit{Remark} \rm\ref{remarkstandard} \textit{(b) and Proposition} \rm\ref{propgeneric}. \textit{Clearly, the notion of auxiliary parametrizations and curves depend on the choice of the compatible system of 
Puiseux parametrizations $\lbrace \varphi^{(1)},\cdots, \varphi^{(r)} \rbrace$. 
However, we will show that the auxiliary multiplicities do not depend on that choice; see Remarks} 
\rm\ref{gooddef1} \textit{and} \rm\ref{gooddef2}.  \textit{For this reason, in the sequel, we frequently omit mentioning the chosen compatible system of Puiseux parametrizations.}\\

\noindent \textit{(b) As we said in the introduction, the auxiliary multiplicities were first used by Pham and Teissier in} \rm\cite{pham} \textit{where they studied the saturation of local rings of analytic reduced curves.  
In that context, the auxiliary multiplicities were used to prove that the saturation of the local ring of a germ of plane curve $(X,0)$ determines, and 
is determined by the characteristic exponents and the intersection multiplicities of the branches of $(X,0)$} 
\rm (\textit{see} \rm\cite[\textit{Prop.} \rm VI.3.2]{pham}).\\

\noindent \textit{(c) We will see in Section} \rm\ref{c5cone} \textit{that the spaces $H^{(i)}_{\theta}$ and $H^{(i,j)}_{\theta}$ are in fact two dimensional $\mathbb{C}-$vector spaces (planes) in $\mathbb{C}^n$ and they are the irreducible components of $C_5(X,0)$. If $(i,j) \in T(X)$, then $C_3(X^{(i)},0)$ and $C_3(X^{(j)},0)$ are the same (as reduced complex spaces). Therefore, in this case $H_{\theta}^{(i,j)}$ can be generated by $w_i$ and $w_{(i,j)}$ or by $w_j$ and $w_{(i,j)}$. On the other hand, if $(i,j) \in NT(X)$, we will see in the proof of Theorem} \rm\ref{generalcase1}, following \cite[Prop. 2.6]{Krasinsky},  \textit{that $H_{\theta}^{(i,j)}$ is generated by $w_i$ and $w_j$, therefore it does not depend on $\theta$. In this case, $H_{\theta}^{(i,j)}$ will be denoted only by $H^{(i,j)}$. Furthermore, in this case, one can check that the contact auxiliary multiplicites 
$m^{(i,j)}_{\theta}$ do not depend on $\theta \in G_{m^{(2)}}$ and are all equal to $m^{(1)}m^{(2)}$.}

\end{remark}

\begin{example}\label{exe3} Consider the germ of curve $(X,0)=V(y^2-x^3,z^2-x^2y)$ in $(\mathbb{C}^3,0)$, where $(x,y,z)$ denotes a local 
system of coordinates in $\mathbb{C}^3$. We have that $\varphi(u)=(u^4,u^6,u^7)$ is a Puiseux parametrization of $(X,0)$ and $m(X,0)=4$. Table \rm\ref{tabela3} \textit{shows the auxiliary characteristic parametrizations and multiplicities of $(X,0)$, where $V(f)$ denotes the zero set of $f \in \mathcal{O}_3$}.

\begin{table}[!h]
\caption{Auxiliary characteristic parametrizations and multiplicities of $(X,0)$ of Example \ref{exe3} }\label{tabela3}
\centering
{\def\arraystretch{2}\tabcolsep=15pt 
\begin{tabular}{@{} c | l | c | c @{}}

\hline
			
 $ \ \ \ \ \ \ $ $\theta$ $ \ \ \ \ \ $ & Auxiliary characteristic parametrization $\phi_{\theta}$  & $H_{\theta}$ &  $ \ \ \ \ \ \ $ $m_{\theta}$  $ \ \ \ \ \ \ $ \\
  
\hline  
  $-1$ & $(0,0,2u^7)$ & $V(y)$ & $7$ \\

  $i$ & $(0,2u^6,(1+i)u^7)$ & $V(z)$ & $6$ \\
   
  $-i$ & $(0,2u^6,(1-i)u^7)$ & $V(z)$ & $6$ \\

\hline
\end{tabular}
}
\end{table}

\end{example}

\section{How to find the $C_5$-cone?}\label{c5cone}

$ \ \ \ \ $ In this section, inspired by the result of Brian\c{c}on Galligo and Granger (Theorem \ref{BGG}), and using the results of \cite{Krasinsky} we present a procedure 
to find the $C_5$-cone of a germ of curve $(X,0)$. We call it the ``$C_5$-procedure". 
The main idea is that in the set of all (infinite) limits of bi-secants, it is sufficient to find a finite number of 
directions in order to determine all planes of $C_5(X,0)$. 

Let us first notice that for a smooth branch, the $C_5$-cone is a line. In fact, in \cite[p. 92]{chirka}, Chirka 
explains that all Whitney cones coincide with the tangent space when they are considered at non-singular points. 

\begin{lemma}\label{smoothcase} Let $(X,0)$ be a germ of smooth curve in $(\mathbb{C}^n,0)$ and let $X$ be a representative of $(X,0)$. Then as reduced complex spaces, we have that:

$$C_5(X,0)=C_3(X,0).$$
\end{lemma}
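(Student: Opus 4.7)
The inclusion $C_3(X,0)\subseteq C_5(X,0)$ is already recorded in Remark \ref{coneremark}(d), so the plan is to establish the reverse inclusion $C_5(X,0)\subseteq C_3(X,0)$.

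Since $(X,0)$ is smooth, it admits a parametrization $\varphi:(\mathbb{C},0)\to (X,0)$ which is an analytic isomorphism, with $\varphi'(0)=a\neq 0$, and the tangent line $C_3(X,0)$ is $\mathbb{C}\cdot a$. Take $v\in C_5(X,0)$ with associated sequences $w_s,w_s'\in X$, $w_s\neq w_s'$, both going to $0$, and $\lambda_s\in\mathbb{C}$ such that $\lambda_s(w_s-w_s')\to v$. Writing $w_s=\varphi(t_s)$, $w_s'=\varphi(t_s')$ with $t_s,t_s'\to 0$ and $t_s\neq t_s'$, and expanding $\varphi(t)=at+\sum_{k\geq 2} b_k t^k$, I would factor
\[
\varphi(t_s)-\varphi(t_s') \;=\; (t_s-t_s')\Bigl(a+\sum_{k\geq 2} b_k\bigl(t_s^{k-1}+t_s^{k-2}t_s'+\cdots+(t_s')^{k-1}\bigr)\Bigr)\;=\;(t_s-t_s')\,c_s,
\]
where $c_s\to a$. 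Hence $\lambda_s(w_s-w_s')=\lambda_s(t_s-t_s')\,c_s$.

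If $v=0$, then $v\in C_3(X,0)$ trivially. If $v\neq 0$, taking norms gives $|\lambda_s(t_s-t_s')|\to \|v\|/\|a\|$, so the scalar sequence $\lambda_s(t_s-t_s')$ is bounded. Passing to a subsequence, we may assume $\lambda_s(t_s-t_s')\to \mu\in\mathbb{C}$, and then $v=\lim \lambda_s(t_s-t_s')c_s=\mu a\in C_3(X,0)$. This shows $C_5(X,0)\subseteq C_3(X,0)$.

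The argument is essentially straightforward; the only point requiring a bit of care is the boundedness of $\lambda_s(t_s-t_s')$, which I would expect to be the mildly technical step. There are no deeper obstacles because the smoothness of $(X,0)$ lets us absorb the entire Taylor tail of $\varphi(t_s)-\varphi(t_s')$ into a factor $c_s$ converging to the fixed tangent direction $a$, forcing any $C_5$-limit to be a scalar multiple of $a$.
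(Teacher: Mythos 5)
Your proof is correct and follows essentially the same route as the paper: both arguments factor $\varphi(t_s)-\varphi(t_s')$ as $(t_s-t_s')$ times a quantity converging to the tangent direction, forcing every limit of bisecants onto the tangent line. The only cosmetic difference is that the paper phrases the limit projectively (so the boundedness of $\lambda_s(t_s-t_s')$ never needs to be addressed), while you work directly with vectors as in the definition of $C_5$; your handling of that step is fine.
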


\begin{proof} This is a direct consequence of the existence of a Puiseux parametrization. In fact, consider a parametrization
$$\varphi : u \mapsto (u, \varphi_2(u), \ldots, \varphi_n(u))$$
of the smooth curve, where the holomorphic functions $\varphi_i$ have order at $0$ at least two. 

Then consider two different sequences of points $\varphi(u_s)$ and $\varphi(v_s)$ with $(u_s)$ and $(v_s)$ both converging to $0$ as $s\rightarrow \infty$.
The line $\overline{\varphi(u_s)-\varphi(v_s)}$ is represented by the projective point 
$$(u_s - v_s: \varphi_2(u_s) - \varphi_2(v_s): \ldots : \varphi_n(u_s) -\varphi_n(v_s)).$$
Since for all $i=2,\cdots,n$ we have that $ord_0(\varphi_i) >1$, all the terms are then multiple of $u_s - v_s$, so that the limit line is represented by the projective point $(1:0 \ldots :0)$ which is the line of the cone $C_3(X,0)$. 
\end{proof}

\begin{theorem}\label{generalcase1}\textbf{($C_5$-procedure)} Let $(X,0)=(X^{(1)}\cup \cdots \cup X^{(r)},0)$ be a germ of singular
 curve in $(\mathbb{C}^n,0)$. Choose a parametrization for each $(X^{(i)},0)$ such that the system of parametrizations is 
 compatible. With the same notations as in Definition \rm\ref{auxdef} and 
 Remark \ref{remarkpham} $(c)$\textit{, consider the following procedure:}\\

\textbf{Step $1$}: \textit{For each $i \in S(X)$ and $\theta \in G_{m^{(i)}} \setminus \lbrace 1 \rbrace $ find $H^{i}_{\theta}$.}

\textbf{Step $2$}: \textit{For each $(i,j) \in T(X)$, with $i < j$ and assuming $m^{(i)} \geq m^{(j)}$, and $\theta \in G_{m^{(j)}}$ find $H^{(i,j)}_{\theta}$.}

\textbf{Step $3$}: \textit{For each $(i,j) \in NT(X)$ find $H^{(i,j)}$.}\\

\noindent Then,
\begin{center}
$C_5(X,0) = \left( \displaystyle { \bigcup_{ \begin{array}{c}
 \mbox{\scriptsize $ i\in S(X),$ }  \\
\mbox{\scriptsize $ \theta \in G_{m^{(i)}} \setminus \lbrace 1 \rbrace$ }     \end{array}   }^{}} H^{i}_{\theta} \right) \cup \left( \displaystyle { \bigcup_{\begin{array}{c}
 \mbox{\scriptsize $ (i,j)\in T(X),$ }  \\
\mbox{\scriptsize $ \theta \in G_{m^{(j)}}$ }   \end{array}}^{}} H^{(i,j)}_{\theta} \right) \cup \left( \displaystyle { \bigcup_{(i,j)\in NT(X)}^{}} H^{(i,j)} \right).$
\end{center}

\textit{Each term of the union above is a two dimensional plane in $\mathbb{C}^n$, repetitions may occur.}

\end{theorem}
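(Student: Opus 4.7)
The plan is to prove both inclusions by decomposing $C_5(X,0)$ according to which branches the two points of a convergent family of bi-secants come from. Passing to subsequences, every $v\in C_5(X,0)$ is realized as $\lim_s \lambda_s(\varphi^{(i)}(u_s)-\varphi^{(j)}(u_s'))$ for fixed indices $i,j$, so it suffices to identify the partial cone $C_5^{(i,j)}(X,0)$ in three configurations: $i=j$ with $X^{(i)}$ singular (Step 1); $(i,j)\in T(X)$ (Step 2); and $(i,j)\in NT(X)$ (Step 3). The remaining case $i=j$ with $X^{(i)}$ smooth contributes, by Lemma \ref{smoothcase}, only the tangent line to $X^{(i)}$, which is absorbed into any plane $H^{(i,k)}_\theta$ or $H^{(i,k)}$ obtained by pairing $X^{(i)}$ with another branch; the hypothesis that $X$ is singular guarantees that such a pair (or a singular branch) exists.

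For ``$\subseteq$'', the key ingredient is the asymptotic expansion of $\varphi^{(i)}(u_s)-\varphi^{(j)}(u_s')$. In the characteristic case $i=j$, pass to a subsequence along which the ratio $\eta_s=u_s'/u_s$ converges to some $\eta\in\mathbb{C}\cup\{\infty\}$. Using the special coordinate where $\varphi^{(i)}_j(u)=u^{m^{(i)}}$, the $j$-th component of the bi-secant equals $u_s^{m^{(i)}}(1-\eta_s^{m^{(i)}})$, which retains order $m^{(i)}$ unless $\eta\in G_{m^{(i)}}$. In the generic sub-case ($\eta\notin G_{m^{(i)}}$, or $\eta=1$ which is handled by a first-order Taylor expansion), the leading direction of the bi-secant is the tangent to $X^{(i)}$, lying in $C_3(X^{(i)},0)\subseteq H^{(i)}_\theta$ for every $\theta$. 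In the critical sub-case $\eta=\theta\in G_{m^{(i)}}\setminus\{1\}$, I write $u_s'=\theta u_s+\epsilon_s$ with $\epsilon_s/u_s\to 0$ and decompose
\[
\varphi^{(i)}(u_s)-\varphi^{(i)}(u_s')=\phi^{(i)}_\theta(u_s)+\bigl[\varphi^{(i)}(\theta u_s)-\varphi^{(i)}(\theta u_s+\epsilon_s)\bigr];
\]
the first summand contributes a vector in $C_3(A^{(i)}_\theta(X),0)$ and the second, by Taylor expansion, one in $C_3(X^{(i)},0)$, so the limit lies in $H^{(i)}_\theta$. The pair cases (Steps 2 and 3) are treated identically after the substitutions $u=t^{\tilde m^{(i)}}$, $v=s^{\tilde m^{(j)}}$, which render the common special coordinate a uniform $m^{(i,j)}$-th power; in the non-tangent case the branch tangents are already transverse, so no auxiliary curve is needed and one recovers $H^{(i,j)}$ directly.

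For ``$\supseteq$'' I would construct explicit sequences filling out each $H$. For $H^{(i)}_\theta$, take $u_s\to 0$ and $u_s'=\theta u_s+c\, u_s^{N}$ with $N=m^{(i)}_\theta-m^{(i)}+1$ and $c\in\mathbb{C}$ a free parameter; a direct computation yields
\[
u_s^{-m^{(i)}_\theta}\bigl(\varphi^{(i)}(u_s)-\varphi^{(i)}(u_s')\bigr)\longrightarrow v_5-c\,m^{(i)}\theta^{m^{(i)}-1}\,v_3,
\]
where $v_3\in C_3(X^{(i)},0)$ and $v_5\in C_3(A^{(i)}_\theta(X),0)$ span $H^{(i)}_\theta$. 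As $c$ ranges over $\mathbb{C}$, this produces an uncountable family of distinct lines in $H^{(i)}_\theta\cap C_5(X,0)$; since Theorem \ref{BGG} guarantees that $C_5(X,0)$ is a \emph{finite} union of planes, this forces $H^{(i)}_\theta\subseteq C_5(X,0)$. The same recipe produces $H^{(i,j)}_\theta$ and $H^{(i,j)}$ by perturbing one parameter value on one branch. Each plane $H$ is genuinely two-dimensional because the choice $\theta\in G_{m^{(i)}}$ (resp.\ $G_{m^{(i,j)}}$) makes the $j$-th coordinate of the auxiliary parametrization vanish identically, so the auxiliary tangent is transverse to the branch tangent, which has a nonzero $j$-th component.

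The main obstacle is the order-bookkeeping in the characteristic sub-case $\eta=\theta$: one must split according to $ord_0(\epsilon_s)$ relative to $m^{(i)}_\theta-m^{(i)}+1$, identify which of the two summands dominates (or whether they balance exactly), and verify in each sub-case that the limit is indeed an element of $H^{(i)}_\theta$ rather than some other plane. The analogous bookkeeping for $(i,j)\in T(X)$ follows the same pattern but involves two Puiseux parametrizations with different multiplicities whose interaction is controlled precisely by the compatibility assumption on the special coordinate. Throughout, Theorem \ref{BGG} plays the role of a safety net that upgrades ``many lines in $H\cap C_5(X,0)$'' to the full inclusion $H\subseteq C_5(X,0)$.
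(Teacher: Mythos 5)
Your proposal follows essentially the same route as the paper's proof: reduce by passing to subsequences to the three configurations (same singular branch, tangent pair, non-tangent pair, with smooth single branches absorbed into the pair planes), analyse the limit $\eta$ of the ratio of parameters to isolate the critical case $\eta=\theta\in G_{m}$ via the special coordinate, and realize the reverse inclusion by the explicit perturbation $u'_s=\theta u_s+c\,u_s^{N}$ with $N=m_\theta-m+1$, which is precisely the paper's choice $g(u)=\theta u+\frac{\lambda\theta}{m}(1-\theta^{k_{\theta}})u^{k_{\theta}-m+1}$, and your order-bookkeeping sub-cases are exactly the paper's (a.3.1)--(a.3.3). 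The one cosmetic difference is your appeal to Theorem \ref{BGG} as a ``safety net'' for the inclusion $H_\theta\subseteq C_5(X,0)$: this is unnecessary (and somewhat against the paper's stated aim of giving a transparent, self-contained construction), since your explicit family already realizes every line $v_\theta+\lambda w$ as $c$ varies, and together with the tangent line $w\in C_3(X,0)\subseteq C_5(X,0)$ these exhaust all lines of $H_\theta$.
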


  We note that the $C_5$-procedure is 
completely implementable on a computer program. In fact, another work on an implementation of this procedure in 
{\sc Singular} program \cite{singular} is being carried out by the second author and Aldicio Miranda (from Universidade Federal de Uberlândia - Brazil) and we hope it will be available soon.

\begin{example}\label{exe2}
Let $(X,0)=(X^{(1)}\cup X^{(2)} \cup X^{(3)} \cup X^{(4)},0)$ be the germ of curve in $(\mathbb{C}^3,0)$ where $(X^{(i)},0)$ is parametrized by the map $\varphi^i:(\mathbb{C},0)\rightarrow (X^{(i)},0)$, defined respectively by:

\begin{center}
$\varphi^{(1)}(u):= (u^6,u^{11}-u^9,u^{11}+u^9)$, $ \ \ \varphi^{(2)}(u):= (u^4,u^6,u^9) $, $ \ \ \varphi^{(3)}(u):= (u^4,u^3,u^5) \ \ $ and $ \ \ \varphi^{(4)}(u):= (u,2u,u)$.
\end{center}

Using the $C_5$-procedure we can see that $C_5(X,0)$ has seven different planes (see Figure \ref{figure1}) and one can easily find the reduced equations for each of them. Taking the product of the seven equations which define these planes, we find that

\begin{center}
$C_5(X,0)=V(xy^5z-y^5z^2-5xy^3z^3+5y^3z^4+4xyz^5-4yz^6)$.
\end{center}

To illustrate this, we present in Table \rm\ref{tabela1} \textit{all the auxiliary parametrizations of $(X,0)$ and the planes $H^{i}_{\theta}$, $H^{(i,j)}_{\theta}$ and $H^{(i,j)}$ used in the $C_5$-procedure to find $C_5(X,0)$. Also in Table} \rm\ref{tabela1} \textit{we present all auxiliary multiplicities of $(X,0)$ for completeness}.\\
 
 \begin{center}
 
 \end{center}

\begin{figure}[h]
\centering
\includegraphics[scale=0.22]{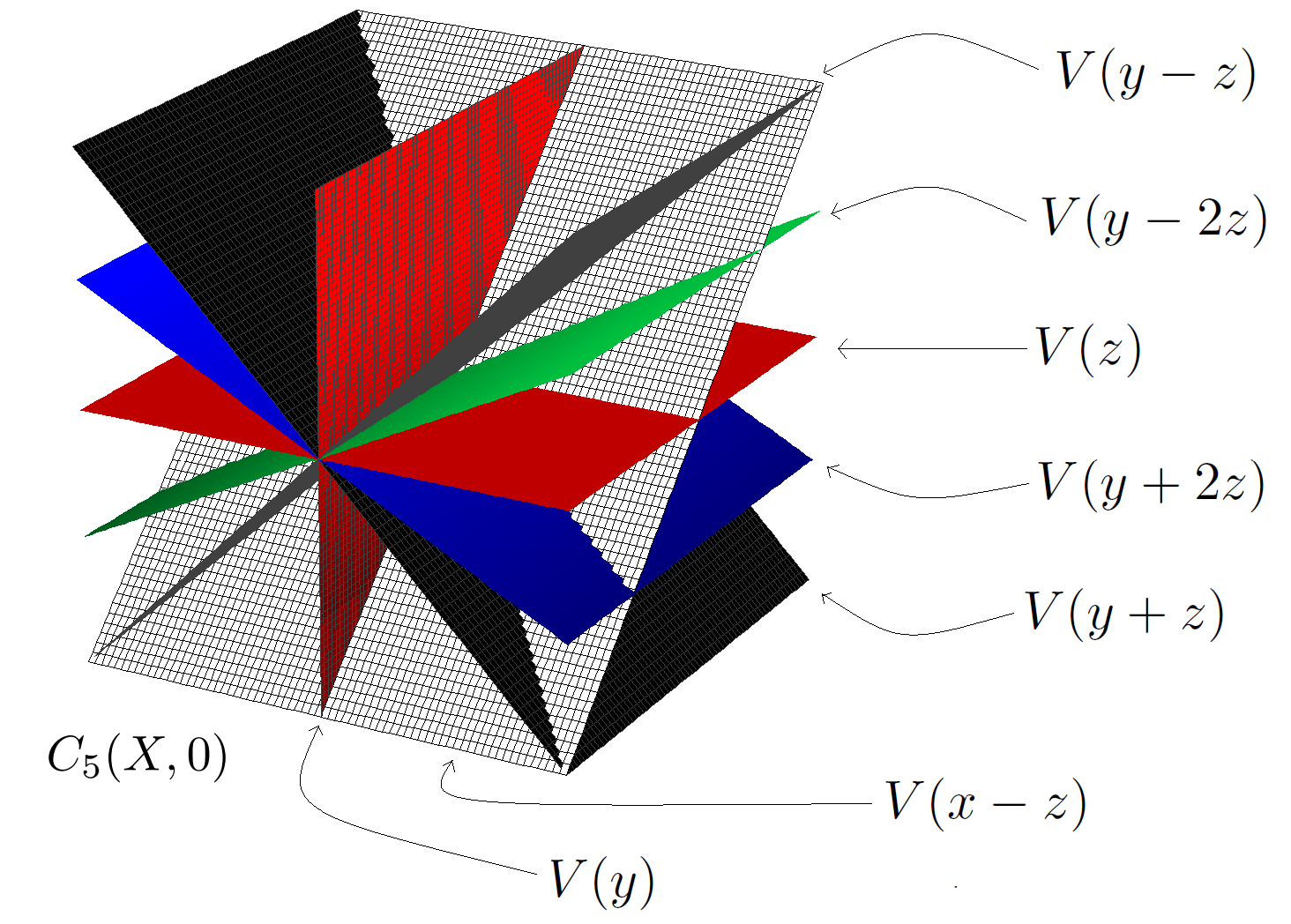}  
\caption{The cone $C_5(X,0)$ (real points).}\label{figure1}
\end{figure}

\newpage


\begin{table}[!h]
\caption{Auxiliary parametrizations and multiplicities of $(X,0)$ of Example \ref{exe2} }\label{tabela1}
\centering
{\def\arraystretch{2}\tabcolsep=12pt 
\begin{tabular}{c | c | l | c | c }

\hline
\multicolumn{4}{c}{\textbf{Step $1$}} \\
\hline			
$i \in S(X)$ &  $\theta$ & Characteristic auxiliary parametrization $\phi_{\theta}^{(i)}$  & $H_{\theta}^{(i)}$ & $m_{\theta}^{(i)}$ \\
  
\hline  
      &  $e^{\pi i / 3}$ & $(0,(\frac{1}{2}+\frac{i \sqrt{3} }{2})u^{11}-2u^9,(\frac{1}{2}+\frac{i \sqrt{3} }{2})u^{11}+2u^9)$ & $V(y+z)$ & $9$ \\
      
      &  $e^{2 \pi i / 3}$ & $(0,(\frac{3}{2}+\frac{i \sqrt{3} }{2})u^{11},(\frac{3}{2}+\frac{i \sqrt{3} }{2})u^{11})$ & $V(y-z)$ & $11$ \\
      
$i=1$ &  $-1$ & $(0,2u^{11}-2u^9,2u^{11}+2u^9)$ & $V(y+z)$ & $9$ \\
      
      &  $e^{4 \pi i / 3}$ & $(0,(\frac{3}{2}-\frac{i \sqrt{3} }{2})u^{11},(\frac{3}{2}-\frac{i \sqrt{3} }{2})u^{11})$ & $V(y-z)$ & $11$ \\
      
      &  $e^{5 \pi i / 3}$ & $(0,(\frac{1}{2}-\frac{i \sqrt{3} }{2})u^{11}-2u^9,(\frac{1}{2}-\frac{i \sqrt{3} }{2})u^{11}+2u^9)$ & $V(y+z)$ & $9$ \\
 
 \hline  
      &  $-1$ & $(0,0,2u^9)$ & $V(y)$ & $9$ \\

$i=2$ &  $i$ & $(0,2u^6,(1-i)u^9)$ & $V(z)$ & $6$ \\
   
      &  $-i$ & $(0,2u^6,(1+i)u^9)$ & $V(z)$ & $6$ \\     
  
  \hline 
  
 \multirow{2}{*}{$i=3$} &  $e^{\pi i / 3}$ & $(0,(\frac{3}{2}+\frac{i \sqrt{3} }{2})u^{4},(\frac{1}{2}+\frac{i \sqrt{3} }{2})u^{5})$ & $V(z)$ & $4$ \\
     
& $e^{2 \pi i / 3}$ & $(0,(\frac{3}{2}-\frac{i \sqrt{3} }{2})u^{4},(\frac{3}{2}+\frac{i \sqrt{3} }{2})u^{5})$ & $V(z)$ & $4$ \\
\hline
\multicolumn{4}{c}{\textbf{Step $2$}} \\
\hline			
$(i,j) \in T(X)$ &  $\theta$ & Contact auxiliary parametrization $\phi_{\theta}^{(i,j)}$  & $H_{\theta}^{(i,j)}$ & $m_{\theta}^{(i,j)}$ \\
  
\hline

\multirow{2}{*}{$(i,j)=(1,2)$} &  $e^{2k \pi i / 4}$, $k$ odd  & $(0, u^{44}, u^{36}+ u^{44}-\theta u^{54})$ & $V(y)$ & $36$ \\
     
& $e^{2k \pi i / 4}$, $k$ even & $(0,-2u^{36} + u^{44},u^{36}+ u^{44}-\theta u^{54})$ & $V(y+2z)$ & $36$ \\
\hline
\multicolumn{4}{c}{\textbf{Step $3$}} \\
\hline			
$(i,j) \in NT(X)$ &  $-$ & Contact auxiliary parametrization $\phi_{\theta}^{(i,j)}$  & $H^{(i,j)}$ & $m^{(i)}m^{(j)}$ \\
  
\hline

  $(i,j)=(1,3)$    &  $\theta \in G_3$ &  $(u^{18}-\theta u^{24},u^{33}-u^{27}-u^{18},u^{33}+u^{27}-\theta^2 u^{30})$ & $V(z)$ & $18$ \\
      
  $(i,j)=(1,4)$    &  $\theta \in G_1$ & $(0,u^{11}-u^9-2u^6,u^{11}+u^9-u^6)$ & $V(y-2z)$ & $6$ \\
      
  $(i,j)=(2,3)$    &  $\theta \in G_3$ & $(u^{12}-\theta u^{16},u^{18}-u^{12},u^{27} - \theta^2 u^{20})$ & $V(z)$ & $12$ \\
      
  $(i,j)=(2,4)$    &  $\theta \in G_1$ & $(0, u^6-2u^4,u^9-2u^4)$ & $V(y-2z)$ & $4$ \\
      
  $(i,j)=(3,4)$    &  $\theta \in G_1$ & $(u^4-u^3,-u^3,u^5-u^3)$ & $V(x-z)$ & $3$ \\
\hline
\end{tabular}
}
\end{table}

\textit{As we said in Remark} \rm\ref{coneremark}, \textit{we note that in this example:}

\begin{center}
$ \left( C_5(X^{(1)},0) \cup C_5(X^{(2)},0) \cup C_5(X^{(3)},0) \cup C_5(X^{(4)},0) \right) \subsetneqq C_5(X,0)$. 
\end{center} 

\end{example}

\begin{remark} Since in the non-tangent case $H^{(i,j)}_{\theta}$ does not depend on $\theta$, we do not need the contact auxiliary parametrizations in Step $3$ to find $H^{(i,j)}_{\theta}=H^{(i,j)}$. Eventhough the information of these auxiliary multiplicities is included in Table \rm\ref{tabela1}, \textit{they are not necessary for the $C_5$-procedure.  They are included for completeness, in order to illustrate other elements of Definition} \rm\ref{auxdef}.\\

\end{remark}


\begin{remark}\label{remark1}
Let $(X,0)$ be a germ of analytic reduced curve in $(\mathbb{C}^n,0)$ and let $(X^{(1)},0), \cdots, (X^{(r)},0)$ be its 
decomposition into irreducible components. Taking representatives $X^{(i)}$ of $(X^{(i)},0)$, consider sequences of points $(x_s), (y_s)$ in $X$ and a sequence of complex numbers $(\lambda_s)$ such 
that $(x_s),(y_s)\rightarrow 0$ 
and $\lambda_s\overline{(x_s-y_s)}\rightarrow v \neq 0$ as $s\rightarrow \infty$. After extracting sub-sequences if needed,  
we can assume there are only two possibilities: either both sequences are on the same branch, or they are in two different branches. 
Hence, we can reduce our study to the following three cases (each one of them corresponding to a step of Theorem \rm\ref{generalcase1}): 

\begin{flushleft}
\textit{Case (a): $(X,0)=(X^{(1)},0)\cup (X^{(2)},0)$ where $(X^{(1)},0)$ and $(X^{(2)},0)$ have different tangents.}\\
\textit{Case (b): $(X,0)$ is irreducible.}\\
\textit{Case (c): $(X,0)=(X^{(1)},0)\cup (X^{(2)},0)$ where $(X^{(1)},0)$ and $(X^{(2)},0)$ have the same tangent.}\\
\end{flushleft}

\end{remark}

\begin{proof}(of Theorem \ref{generalcase1}) The proof of this theorem is 
mainly due to Krasi\'nski in \cite{Krasinsky}, with ideas already present in 
\cite{briancon}. We describe below the link between the results of \cite{Krasinsky} and Theorem \ref{generalcase1}. The case where $(X,0)$ is a 
smooth branch has already been treated in Lemma \ref{smoothcase}.

Let $(X,0)=(X^{(1)}\cup \cdots \cup X^{(r)},0)$ be a germ of singular curve in $(\mathbb{C}^n,0)$.  As we pointed out in remark \ref{coneremark}
the $C_5$ cone of the curve is more than just the union of cones of its irreducible components. The other planes are the $H^{(i j)}_\theta$ which are the result
of taking sequences of points in two different irreducible components $(X^{(i)},0)$ and $(X^{(j)},0)$ of the curve. Krasi\'nsky refers to this as the relative tangent cone.

First of all, if the  irreducible components $(X^{(i)},0)$ and $(X^{(j)},0)$ have different tangents then \cite[Prop. 2.6]{Krasinsky} implies that the plane
$H^{(i,j)}$ generated by these two different tangent lines is the only extra component and so we get that
\[C_5(X^{(i)} \cup X^{(j)},0) = C_5(X^{(i)},0) \cup C_5(X^{(j)},0) \cup H^{(i,j)} \]

By theorem \ref{BGG} we know that each plane (irreducible component) of $C_5(X,0)$ contains at least one tangent line to $(X,0)$. For the case  
of a single branch, or two different branches with the same tangent, every such plane contains this tangent line. So to determine a plane of the $C_5$ cone
all we need is to compute a limit of secants not contained in the tangent line, and this is precisely the role of the auxiliary parametrizations.

If  $(X,0)$ is a singular branch then Theorem 3.4 in \cite{Krasinsky} states that:
\begin{center}
$C_5(X,0)=H_{\theta_1}\cup \cdots \cup H_{\theta_{m-1}}$, 
\end{center}
where $m$ is the multiplicity of $(X,0)$ and $1\neq \theta_i \in G_m$. 

On the other hand, if we have two irreducible components $(X^{(i)},0)$ and $(X^{(j)},0)$ with the same tangent, then again \cite[Th. 3.4]{Krasinsky} implies
that 
\[C_5(X^{(i)} \cup X^{(j)},0) = C_5(X^{(i)},0) \cup C_5(X^{(j)},0) \cup H^{(i,j)}_{\theta_1} \cup \cdots \cup H^{(i,j)}_{\theta_{m^{(j)}}}\]
with $m^{(j)} \leq m^{(i)}$ and $\theta_k \in G_{m^{(j)}}$.
\end{proof}

\section{Auxiliary multiplicities as a bi-Lipschtiz invariant}\label{sec4}

$ \ \ \ \ $ In this section we will show that the collection of all auxiliary multiplicities associated to a germ of curve $(X,0)$ in $(\mathbb{C}^n,0)$ is a complete invariant of the bi-Lipschitz type of the singularity (Theorem \ref{bilipequivalence}). In order to do that, we will see first in Section \ref{planecurves} that this is true for plane curves through some results of Pham and Teissier in \cite{pham}. After this, we will show in 
Section \ref{genericproj} that the auxiliary multiplicities of a curve characterize the generic plane projections. Finally, to conclude the proof, we will use in Section \ref{bilipschitzeq} a result by Teissier that says that the restriction to a curve $X \subset \mathbb{C}^n$ of a generic linear projection from $\mathbb{C}^n$ to $\mathbb{C}^2$ is bi-Lipschitz with respect to the outer metric. We note that the ideas and the spirit of this section are completely inspired by the paper \cite{briancon} and its appendix \cite{pham}.

\subsection{Plane curves}\label{planecurves}

$ \ \ \ \ $ In this section, unless otherwise stated, $(X,0)$ denotes a germ of plane curve. In \cite[Sec. 3]{pham}, Pham and Teissier proved a series of results on plane curves relating characteristic 
exponents and intersection multiplicities with auxiliary multiplicities (although the notion of ``auxiliary multiplicities'' does not appear in \cite{pham} with this terminology). For commodity of the reader we choose to rewrite some of these results in this work.  We will use these relations to state the following 
theorem, which is itself a reformulation of \cite[Prop. VI.3.2]{pham}.  

\begin{theorem}\label{theocurves} 
Let $(X,0)=(X^{(1)} \cup \cdots \cup X^{(r)},0)$ and $(Y,0)=(Y^{(1)} \cup \cdots \cup Y^{(r')},0)$ be the decomposition 
into branches of germs of plane curves. 
Then $(X,0)$ and $(Y,0)$ have the same topological type if and only if $r=r'$ and there exists a bijection 
$\rho: \lbrace 1,\cdots, r \rbrace \rightarrow \lbrace 1,\cdots, r \rbrace $ preserving all the auxiliary multiplicities, 
that is:
$$ChAM(X^{(i)},0)=ChAM(Y^{(\rho(i))},0) \ \ \ \, \, {\rm and}\, \, \ \ \  CoAM(X^{(i)},X^{(j)})=CoAM(Y^{(\rho(i))},Y^{(\rho(j))}),$$ 

\noindent for all $i\in S(X)$ and $(i,j)\in T(X) \cup NT(X)$.
 
\end{theorem}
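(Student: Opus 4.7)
My plan is to derive Theorem \ref{theocurves} from the classical topological classification of plane curves combined with the explicit formulas, already present in \cite{pham}, that relate auxiliary multiplicities to characteristic exponents and intersection multiplicities. Recall that by the theorem of Zariski and Brauner, two germs of reduced plane curves have the same topological (equivalently, embedded topological) type if and only if there is a bijection between their sets of branches that preserves (i) the characteristic Puiseux exponents of each branch and (ii) the intersection multiplicity of each pair of branches. Thus it suffices to show that the data
\[
\bigl(\, ChAM(X^{(i)},0);\ CoAM(X^{(i)},X^{(j)},0)\,\bigr)_{i,(i,j)}
\]
and the data of characteristic exponents plus pairwise intersection multiplicities determine each other.

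For the characteristic multiplicities, I would fix a branch $(X^{(i)},0)$ of multiplicity $m^{(i)}$ with Puiseux parametrization $\varphi^{(i)}(u)=(u^{m^{(i)}},\sum_{k>m^{(i)}}a_k u^k)$. The first coordinate of $\varphi^{(i)}(u)-\varphi^{(i)}(\theta u)$ vanishes identically for $\theta\in G_{m^{(i)}}$, so
\[
m^{(i)}_\theta \;=\; \min\{\, k>m^{(i)} : a_k\neq 0 \text{ and } \theta^k\neq 1\,\}.
\]
A direct inspection shows that this minimum depends only on $ord(\theta)$ in $G_{m^{(i)}}$ and, as $ord(\theta)$ ranges over the divisors of $m^{(i)}$ different from $1$, the values $m^{(i)}_\theta$ recover exactly the characteristic Puiseux exponents $\beta_1<\beta_2<\cdots$ of $X^{(i)}$; conversely the $\beta_\ell$'s determine every $m^{(i)}_\theta$. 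This is the content of the first part of \cite[Sec.~3]{pham}, which I would restate as a lemma and cite.

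For the contact multiplicities between two branches $X^{(i)},X^{(j)}$, I would use in an analogous way the expression
\[
m^{(i,j)}_\theta \;=\; \min_\ell\, ord_0\!\bigl[\,\varphi^{(i)}_\ell(u^{\tilde m^{(i)}})-\varphi^{(j)}_\ell\bigl((\theta u)^{\tilde m^{(j)}}\bigr)\bigr],
\]
and compare it with the standard formulas (again \cite[Sec.~3]{pham}) that write the intersection multiplicity $i(X^{(i)},X^{(j)})$ as a sum, over $\theta\in G_{m^{(i,j)}}$, of contributions controlled by the $m^{(i,j)}_\theta$'s. In the non-tangent case the Lemma~\ref{notangentcase}-style computation gives $m^{(i,j)}_\theta=m^{(i,j)}$ for every $\theta$, which matches the classical formula $i(X^{(i)},X^{(j)})=m^{(i)}m^{(j)}$. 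In the tangent case the sequence $CoAM(X^{(i)},X^{(j)},0)$, together with $ChAM(X^{(i)},0)$ and $ChAM(X^{(j)},0)$, determines both the coincidence exponent of the pair and $i(X^{(i)},X^{(j)})$, and conversely these classical invariants determine the whole ordered sequence of $m^{(i,j)}_\theta$'s.

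With these two dictionaries in hand the proof becomes a bookkeeping exercise: a bijection $\rho:\{1,\ldots,r\}\to\{1,\ldots,r'\}$ preserves all characteristic and contact auxiliary multiplicities if and only if it preserves all characteristic exponents and all pairwise intersection multiplicities, which by Zariski--Brauner is equivalent to $(X,0)$ and $(Y,0)$ having the same topological type (forcing in particular $r=r'$). The step I expect to be most delicate is the tangent case of the contact multiplicities, since there one must check carefully that the ordered multiset $(m^{(i,j)}_{\theta_1}\leq\cdots\leq m^{(i,j)}_{\theta_{m^{(i,j)}}})$ really contains no less and no more information than the coincidence exponent plus the two individual characteristic sequences; this is precisely where the combinatorial content of \cite[Prop.~VI.3.2]{pham} is needed, and I would present its statement as an intermediate proposition before assembling the two implications of Theorem \ref{theocurves}.
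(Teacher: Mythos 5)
Your proposal is correct and follows essentially the same route as the paper: both reduce the theorem to the classical characterization of topological type by characteristic exponents and pairwise intersection multiplicities, and then establish the two-way dictionary with auxiliary multiplicities via the three lemmas of Pham--Teissier (ChAM equals the set of characteristic exponents; the intersection multiplicity is recovered from the sum of the contact multiplicities; and, in the tangent case, the ordered sequence of contact multiplicities is determined by the characteristic exponents together with the intersection multiplicity). The delicate point you flag --- that in the tangent case the ordered multiset of contact multiplicities carries exactly the information of the coincidence data plus the intersection multiplicity --- is precisely where the paper invokes Lemme VI.3.5 of Pham--Teissier, as you anticipated.
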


The case of irreducible curves is taken care of by the following lemma:

\begin{lemma}\rm(\cite[\textit{Lem. VI}.3.3]{pham})\label{lemma1} \textit{Let $(X,0)$ be an irreducible plane curve. 
The set $ChAM(X,0)$ and the set of all characteristic exponents of $(X,0)$ are the same.}
\end{lemma}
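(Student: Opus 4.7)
The plan is to make the formula for $m_\theta$ explicit from a Puiseux expansion, and then pattern-match it against the recursive definition of the characteristic exponents of $(X,0)$.

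First, I would fix a Puiseux parametrization $\varphi(u)=(u^m,y(u))$ of $(X,0)$, with $y(u)=\sum_{i>m} a_i u^i$. A one-line computation then yields
$$\phi_\theta(u)=\varphi(u)-\varphi(\theta u)=\Bigl(0,\ \sum_{i>m} a_i (1-\theta^i) u^i\Bigr),$$
and since $1-\theta^i=0$ exactly when $ord(\theta)\mid i$, this gives
$$m_\theta=\min\bigl\{\,i>m\,:\,a_i\neq 0\ \text{and}\ ord(\theta)\nmid i\,\bigr\}.$$
In particular, $m_\theta$ depends on $\theta$ only through $d:=ord(\theta)$, which ranges over the divisors of $m$ strictly greater than $1$.

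Next, I would recall the recursive definition of the characteristic exponents of $(X,0)$: set $\beta_0=m$, $e_0=m$, and for $k\geq 0$,
$$\beta_{k+1}=\min\bigl\{\,i\,:\,a_i\neq 0,\ e_k\nmid i\,\bigr\},\qquad e_{k+1}=\gcd(e_k,\beta_{k+1}),$$
the sequence terminating at some index $g$ with $e_g=1$.

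For the inclusion $ChAM(X,0)\setminus\{m\}\subseteq\{\beta_1,\dots,\beta_g\}$, given $\theta\neq 1$ of order $d$, I would let $k$ be the largest index with $d\mid e_k$ (which exists because $d\mid e_0=m$ while $d\nmid e_g=1$). For any $i<\beta_{k+1}$ with $a_i\neq 0$, the minimality in the definition of $\beta_{k+1}$ forces $e_k\mid i$ and hence $d\mid i$; and $d\nmid\beta_{k+1}$, since otherwise $d\mid\gcd(e_k,\beta_{k+1})=e_{k+1}$, contradicting the maximality of $k$. Therefore $m_\theta=\beta_{k+1}$. Conversely, for each $k\in\{0,\dots,g-1\}$ the integer $e_k$ is a divisor of $m$ strictly greater than $1$, so any primitive $e_k$-th root of unity $\theta$ has $d=e_k$ and, by the previous argument, realises $m_\theta=\beta_{k+1}$. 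Together with the element $m=\beta_0$ included in $ChAM(X,0)$ by definition, this establishes the set equality.

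The step I expect to be the main obstacle is the divisibility bookkeeping in the third paragraph --- pinning down the largest $k$ with $d\mid e_k$ and checking that the minimum really jumps to $\beta_{k+1}$, as well as confirming surjectivity of $d\mapsto m_\theta$ onto $\{\beta_1,\dots,\beta_g\}$. The rest of the argument is a direct unpacking of the two definitions.
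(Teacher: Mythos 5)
Your proof is correct and follows essentially the same route as the paper's sketch: the paper organizes the same computation via the filtration $G_m \supset G_{e_1} \supset \cdots \supset G_{e_g}=\{1\}$ and the equivalence $\theta \in G_{e_{k-1}}\setminus G_{e_k} \Leftrightarrow m_\theta=\beta_k$, which is exactly your "largest $k$ with $ord(\theta)\mid e_k$" bookkeeping, since $\theta\in G_{e_j}$ iff $ord(\theta)\mid e_j$. Your write-up supplies the divisibility details that the paper defers to Pham--Teissier and to \cite[Sec. 8.1.2]{GiSiTe}.
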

   
      To give an idea of how this works, recall that if $m<\beta_1< \ldots < \beta_g$ is the set of characteristic exponents of the 
   germ $(X,0)$ with multiplicity $m$ then after a change of coordinates it has a parametrization $\varphi=(\varphi_1,\varphi_2)$ of the form:\\

\noindent $\hspace{1cm} \varphi_1(u)=u^m$ \\ 

\noindent $\hspace{1cm} \varphi_2(u) =u^{\beta_{1}}+\sum_{k=1}^{s_{1}} a_{\beta_{1}+k e_{1}} u^{\beta_{1}+k e_{1}}+a_{\beta_{2}} u^{\beta_{2}}+\sum_{k=1}^{s_{2}} a_{\beta_{2}+k e_{2}} u^{\beta_{2}+k e_{2}}+\cdots$ \\

$ \hspace{6cm} \cdots +a_{\beta_{j}} u^{\beta_{j}}+\sum_{k=1}^{s_{j}} a_{\beta_{j}+k e_{j}} u^{\beta_{j}+k e_{j}} +\cdots+a_{\beta_{g}} u^{\beta_{g}}+\sum_{k=1}^{\infty} a_{\beta_{g}+k} u^{\beta_{g}+k}$\\

\noindent where $e_1=gcd(m, \beta_1)$ and in general $e_j=gcd(e_{j-1},\beta_j)$. This gives rise to a filtration of the group of $m$-th roots of unity
   \[G_m \supset G_{e_1} \supset \cdots \supset G_{e_g}=\{1\} \]
   with the property that:
   \[ \theta \in  G_{e_{k-1}} \backslash G_{e_{k}} \Longleftrightarrow m_\theta:= ord_0(\varphi_2(u)-\varphi_2(\theta u))=\beta_{k}\]
   (for a more detailed explanation see \cite[Sec. 8.1.2]{GiSiTe}) which gives the desired result.\\

 For the general case it is enough to consider a curve with two branches $(X,0)=(X^{(1)} \cup X^{(2)},0)$. In this setting the topological type of the curve
 determines and is determined by the set of characteristic exponents for each branch, plus the intersection multiplicity between them. The following
 lemma tells us that this intersection multiplicity can be recovered from $CoAM(X^{(1)}, X^{(2)},0)$, the ordered sequence of contact auxiliary multiplicities of $(X^{(1)}\cup X^{(2)},0)$.
 
\begin{lemma}\rm(\cite[\textit{Lem. VI}.3.4]{pham}) \label{lemma2} \textit{The intersection multiplicity $i(X^{(1)},X^{(2)},0)$ of  two plane branches $(X^{(1)},0)$ and 
$(X^{(2)},0)$ at $0$ satisfies:
$$i(X^{(1)},X^{(2)})=\dfrac{(m_{\theta_1}^{(1,2)}+ \cdots + m_{\theta_{m^{(2)}}}^{(1,2)})}{m^{(2)}},$$
\noindent where $m^{(1)} \geq m^{(2)}$ and $G_{m^{(2)}}= \lbrace {\theta_1}^{}, \cdots , {\theta_{m^{(2)}}} \rbrace $, (see Definition} \ref{auxdef} b)).

\end{lemma}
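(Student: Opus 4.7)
Since we are working with plane branches, each $\varphi^{(i)}$ has two coordinates. Using Remark \ref{remarkstandard}, I can assume the compatible system has common special first coordinate, i.e.\ $\varphi^{(1)}(t)=(t^{m^{(1)}},g_1(t))$ and $\varphi^{(2)}(t)=(t^{m^{(2)}},g_2(t))$ with $\mathrm{ord}_0 g_i>m^{(i)}$. Set $m=m^{(1,2)}$ and, as in Definition \ref{auxdef}(b), introduce the re-parametrizations
$$\tilde\varphi^{(1)}(u)=\varphi^{(1)}(u^{\tilde m^{(1)}})=\bigl(u^{m},P(u)\bigr),\qquad \tilde\varphi^{(2)}(u)=\varphi^{(2)}(u^{\tilde m^{(2)}})=\bigl(u^{m},Q(u)\bigr),$$
where $P(u)=g_1(u^{\tilde m^{(1)}})$ and $Q(u)=g_2(u^{\tilde m^{(2)}})$. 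Because $\theta^{m}=1$ for any $\theta\in G_{m}$, the first component of $\phi^{(1,2)}_\theta(u)$ vanishes identically, so
$$m^{(1,2)}_\theta=\mathrm{ord}_u\bigl(P(u)-Q(\theta u)\bigr).$$

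Next, I will compute the intersection multiplicity using a defining equation $f_1$ of $X^{(1)}$. The Weierstrass/Puiseux factorisation
$$f_1(x,y)=\prod_{\tau\in G_{m^{(1)}}}\!\bigl(y-g_1(\tau x^{1/m^{(1)}})\bigr)$$
is well defined because the product is invariant under the Galois action. Evaluating along the (non-primitive) parametrization $\tilde\varphi^{(2)}$ and using that $\tilde\varphi^{(2)}$ traverses $X^{(2)}$ with multiplicity $\tilde m^{(2)}$, I obtain
$$\tilde m^{(2)}\cdot i(X^{(1)},X^{(2)})=\mathrm{ord}_u f_1\bigl(\tilde\varphi^{(2)}(u)\bigr)=\sum_{\tau\in G_{m^{(1)}}}\mathrm{ord}_u\bigl(Q(u)-g_1(\tau u^{\tilde m^{(1)}})\bigr).$$

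Now comes the key combinatorial/Galois step. The surjective homomorphism
$$\pi\colon G_{m}\longrightarrow G_{m^{(1)}},\qquad \sigma\longmapsto \sigma^{\tilde m^{(1)}}$$
is $\tilde m^{(1)}$-to-one, and whenever $\pi(\sigma)=\tau$ one has $P(\sigma u)=g_1(\sigma^{\tilde m^{(1)}}u^{\tilde m^{(1)}})=g_1(\tau u^{\tilde m^{(1)}})$. Lifting the sum over $G_{m^{(1)}}$ to a sum over $G_{m}$ costs exactly a factor $\tilde m^{(1)}$, whence
$$\tilde m^{(1)}\tilde m^{(2)}\cdot i(X^{(1)},X^{(2)})=\sum_{\sigma\in G_{m}}\mathrm{ord}_u\bigl(Q(u)-P(\sigma u)\bigr).$$
Applying the change of variable $u\mapsto\sigma^{-1}u$ term by term (which preserves orders at $0$) and renaming $\theta=\sigma^{-1}$, the right hand side becomes $\sum_{\theta\in G_{m}}\mathrm{ord}_u(P(u)-Q(\theta u))=\sum_\theta m^{(1,2)}_\theta$, which gives the desired formula after dividing by $\tilde m^{(1)}\tilde m^{(2)}$.

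The delicate point I expect to handle carefully is the covering argument: one must keep track of the two independent factors $\tilde m^{(1)}$ and $\tilde m^{(2)}$ that appear for different reasons ($\tilde m^{(2)}$ from the fact that $\tilde\varphi^{(2)}$ is not primitive, and $\tilde m^{(1)}$ from the multiplicity of the fibres of $\pi$), and then verify that the Galois-invariant product formula for $f_1$ behaves well after the substitution $x=u^{m}$. Everything else is bookkeeping with orders of convergent power series.
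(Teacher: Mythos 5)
Your proof is correct, but it is worth noting that the paper does not actually prove this lemma: it is quoted from Pham--Teissier, and Remark \ref{remarklemma2}(2) observes that it is ``practically a direct consequence of Halphen's formula'' (\cite[Prop. 3.10]{greuel6}), which expresses $i(X^{(1)},X^{(2)})$ as the sum of the orders of the pairwise differences of the Puiseux roots of the two branches. What you have written is essentially a self-contained derivation of the relevant instance of Halphen's formula: the factorization $f_1(x,y)=\prod_{\tau\in G_{m^{(1)}}}\bigl(y-g_1(\tau x^{1/m^{(1)}})\bigr)$, the substitution $x=u^{m}$ through the non-primitive parametrization $\tilde\varphi^{(2)}$ (contributing the factor $\tilde m^{(2)}$), and the lift of the sum along the $\tilde m^{(1)}$-to-one covering $G_{m}\to G_{m^{(1)}}$, $\sigma\mapsto\sigma^{\tilde m^{(1)}}$ (contributing the factor $\tilde m^{(1)}$). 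The final substitution $u\mapsto\sigma^{-1}u$, together with the observation that the first coordinate of $\phi^{(1,2)}_\theta$ vanishes identically because $\theta^{m}=1$, correctly identifies each summand with $m^{(1,2)}_\theta$. So your route buys a complete proof where the paper only offers a reference; conversely, quoting Halphen's formula directly would let you skip the Weierstrass-product and covering steps entirely.

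Two small points to tidy up. First, the hypothesis $\mathrm{ord}_0\,g_i>m^{(i)}$ forces both branches to be tangent to the $x$-axis, hence tangent to each other; to cover the non-tangent case in the same computation you should only assume $\mathrm{ord}_0\,g_i\geq m^{(i)}$ (nothing in your argument uses the strict inequality), or dispose of that case separately as in Remark \ref{remarklemma2}(1). Second, you should note that for distinct branches no difference $Q(u)-P(\sigma u)$ vanishes identically (otherwise the branches would coincide), so all the orders appearing in the sums are finite.
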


\begin{remark}\label{remarklemma2} Note that:
\begin{enumerate}
  \item  If the branches are transversal then  $m_{\theta}^{(1,2)}=m^{(1)}m^{(2)}$ for all $\theta \in G_{m^{(2)}}$ and so 
    $$i(X^{(1)},X^{(2)})= \dfrac{m^{(1)}(m^{(2)})^2}{m^{(2)}}=m^{(1)}m^{(2)}.$$
    \item This lemma is practically a direct  consequence of Halphen's formula \rm\cite[\textit{Prop}. 3.10]{greuel6} \textit{as seen in the proof by Pham and Teissier}
    in \cite[Lem. VI.3.4]{briancon}.
   \end{enumerate} 
\end{remark}

The last ingredient for the proof of Theorem \ref{theocurves} is to understand how the topological type of the plane curve  $(X,0)=(X^{(1)}\cup X^{(2)},0)$ 
determines all of its auxiliary multiplicities. We already know (Lemma \ref{lemma2})  that $ChAM(X^{i},0))$ corresponds to the set of characteristic exponents
of the branch. Also, if the branches are transversal, the previous remark tells us that the intersection multiplicity determines the contact auxiliary multiplicities $CoAM(X^{(1)}, X^{(2)},0)$, so all we are left with is the tangent case.  \\

Lemma \ref{lemma3} below gives us a precise description of  $CoAM(X^{(1)}, X^{(2)},0)$. But most importantly for us, it establishes that this ordered 
sequence of integers is an invariant of the topological type of the curve. Let

\begin{center}
 $\varphi^{(1)}(u)=\left(\displaystyle u^{m^{(1)}} \ , \displaystyle { \sum_{i\geq m^{(1)}}^{}}a_{i}u^i \right)$ $ \ \ \ $ and $ \ \ \ $ $\varphi^{(2)}(u)=\left(\displaystyle u^{m^{(2)}} \ , \displaystyle { \sum_{i\geq m^{(2)}}^{}}b_{i}u^i \right)$
 \end{center} 

\noindent be a compatible system of parametrizations for $(X,0)$. Recall that  $ m_{\theta}^{(1,2)}$  is defined as the order of the series
 \[ \varphi_2^{(1)}( u^{ m^{(2)}})- \varphi_2^{(2)}(\theta u^{m^{(1)}}) \]
 Note that under this reparametrization of the branches, the sequence of  characteristic exponents of the branch $(X^{(1)},0)$ is multiplied by $m^{(2)}$
 and the sequence of  characteristic exponents of the branch $(X^{(2)},0)$ is multiplied by $m^{(1)}$.
 We will assume that these new sequences coincide up to order $\tau \geq 0$. Meaning that if  $m^{(1)}<\alpha_1< \cdots < \alpha_s$ and
  $m^{(2)}<\gamma_1 < \cdots < \gamma_r$ are the characteristic exponents of $(X^{(1)},0)$ and $(X^{(2)},0)$, respectively, then 
  \begin{center}
$\alpha_1 m^{(2)}= \gamma_1 m^{(1)} =:\beta_1$, $ \ \ $ $\alpha_2 m^{(2)}= \gamma_2 m^{(1)}
=:\beta_2$, $ \ \ $  $ \ \cdots \ $ $ \ \ $, $\alpha_\tau m^{(2)} = \gamma_\tau m^{(1)} =: \beta_\tau$. 
\end{center}
Finally, let $\delta:= max \lbrace m_{\theta}^{(1,2)} \ | \ \theta \in G_{m^{(2)}} \rbrace$. Now we are able to state: \\

\begin{lemma}(\rm\cite{pham}, \textit{Lem. VI.3.5) \label{lemma3}} 
\textit{There exists an integer $q$, determined by the intersection multiplicity of the branches and by the characteristic exponents of $(X^{(1)},0)$ and $(X^{(2)},0)$, such that
 $0 \leq q \leq \tau$  and:}\\

\noindent \textit{(a) The set of contact auxiliary multiplicities is equal to          
         \[ \lbrace \beta_1,\cdots, \beta_q, \delta \rbrace \]
         (In case $q=0$ it is only $\delta$.)}\\

\noindent \textit{(b) Set $e_0=m^{(2)}$.  For $j \in \{1,\ldots \tau\}$ define $e_j=gcd\left (m^{(2)},\gamma_1,\ldots \gamma_j\right)$ , then the number of $\theta$'s in $G_{m^{(2)}}$ such that $m^{(1,2)}_\theta=\beta_k$ (respect. $m^{(1,2)}_\theta= \delta$) is equal to $e_{k-1}-e_{k}$ (respect. $e_q$).}

\end{lemma}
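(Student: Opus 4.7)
The plan is to compute $m^{(1,2)}_\theta$ explicitly as $\theta$ runs through the filtration
$G_{m^{(1,2)}} \supset G_{E_1} \supset \cdots \supset G_{E_\tau}$, and read off parts (a) and (b).
Writing $g_i(u) := \varphi_2^{(i)}(u^{\tilde m^{(i)}})$ so that
$m^{(1,2)}_\theta = \mathrm{ord}_0(g_1(u) - g_2(\theta u))$,
the basic input is the single-branch fact underlying Lemma \ref{lemma1}: the exponents of $u$
appearing in $\varphi_2^{(i)}$ with degree strictly less than the $k$-th characteristic exponent are
all divisible by the $\gcd$ up to that point. After scaling by $\tilde m^{(i)}$, this means the
support of $g_i$ below degree $\beta_k$ consists of multiples of $E_{k-1}$; consequently, for
$\theta \in G_{E_{k-1}}$, the series $g_2(\theta u)$ and $g_2(u)$ coincide in degree $<\beta_k$.

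Using that the two branches share the scaled characteristic exponents $\beta_1,\ldots,\beta_\tau$,
and performing a standard normalization (admissible reparametrizations of each branch preserving
the Puiseux form, together with an analytic change of the ambient coordinate $y$), one brings $g_1$
and $g_2$ into a form where they agree term by term up to a first discrepancy, whose degree we call
$\delta$. Let $q$ be the number of indices $k$ with $\beta_k < \delta$, so $0\le q \le \tau$.
For $\theta \in G_{E_{k-1}}\setminus G_{E_k}$ with $k\le q$, all monomials of degree $<\beta_k$
cancel in $g_1(u)-g_2(\theta u)$ (matching coefficients together with the divisibility observation
above), while at degree $\beta_k$ the coefficient is a nonzero multiple of $(1-\theta^{\beta_k})$,
which is nonzero precisely because $\theta \notin G_{E_k}$; hence $m^{(1,2)}_\theta = \beta_k$.
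For $\theta \in G_{E_q}$ the same cancellation extends up to degree $\delta - 1$ by the definition
of $\delta$, and the first surviving monomial sits in degree $\delta$; this also identifies
$\delta$ with $\max_\theta m^{(1,2)}_\theta$ as stated.

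Part (a) is then immediate, and part (b) follows from $|G_{E_{k-1}}\setminus G_{E_k}| = E_{k-1}-E_k$
and $|G_{E_q}|=E_q$. To see that $q$ is determined by the characteristic exponents and by
$i(X^{(1)},X^{(2)})$, sum the contact auxiliary multiplicities just computed and invoke
Lemma \ref{lemma2}:
$$\sum_{k=1}^{q}(E_{k-1}-E_k)\,\beta_k + E_q\,\delta \;=\; \tilde m^{(1)}\tilde m^{(2)}\, i(X^{(1)},X^{(2)}).$$
Since the $\beta_k$ and the $E_k$ are read off from the characteristic exponents alone, this single
identity recovers both $q$ and $\delta$ from the intersection multiplicity.

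The main obstacle will be the normalization step: one has to verify that the two Puiseux series
$g_1, g_2$ can indeed be brought into a form in which their coefficients literally agree up to
degree $\delta - 1$, without altering the numerical invariants in the statement. This reduces to a
careful bookkeeping of the reparametrizations of each branch (of the form $u\mapsto cu+\cdots$
preserving the Puiseux shape) combined with the permissible changes of the ambient coordinate $y$;
equivalently, it amounts to showing that non-characteristic coefficients carry no information
relevant to the contact multiplicities. Once this is granted, the rest of the argument is
essentially the single-branch computation of Lemma \ref{lemma1} applied simultaneously to $g_1$
and $g_2$.
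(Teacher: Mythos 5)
Your overall strategy is sound, and it is essentially the classical argument; note, however, that the paper itself does not prove this lemma --- it is quoted verbatim from Pham--Teissier (Lemme VI.3.5), so there is no in-paper proof to compare against. Your reduction to the single-branch divisibility fact (support of $g_i$ below $\beta_k$ lies in $E_{k-1}\mathbb{Z}$, since $\tilde m^{(i)}\gcd(m^{(i)},\alpha_1,\ldots,\alpha_{k-1})=\gcd(m^{(1,2)},\beta_1,\ldots,\beta_{k-1})=E_{k-1}$ for both $i$), the case split $\theta\in G_{E_{k-1}}\setminus G_{E_k}$ versus $\theta\in G_{E_q}$, and the recovery of $q$ and $\delta$ from Lemma \ref{lemma2} (using that $(q,\delta)\mapsto\sum_{k\le q}(E_{k-1}-E_k)\beta_k+E_q\delta$ is strictly increasing in $\delta$ across the transition points) are all correct.

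The one place you overcomplicate things is the step you flag as the ``main obstacle.'' No change of the ambient coordinate $y$ is needed, and invoking one is actually dangerous here: it alters the compatible Puiseux parametrizations, and the well-definedness of the $m^{(1,2)}_\theta$ under such changes is only established \emph{after} this lemma (Remarks \ref{gooddef1} and \ref{gooddef2}), so your argument would risk circularity. The correct normalization is purely group-theoretic: set $\delta:=\max_\theta m^{(1,2)}_\theta$, pick $\theta_0$ realizing it, and replace $g_2(u)$ by $g_2(\theta_0 u)$ (equivalently reparametrize $X^{(2)}$ by $v\mapsto\theta_0^{\tilde m^{(2)}}v$, which preserves the Puiseux form since $\theta_0^{m^{(1,2)}}=1$). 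This merely translates the index set, $m_\theta\mapsto m_{\theta_0\theta}$, hence permutes the multiset you are computing, and by the very definition of $\delta$ as the maximum the two series now agree coefficient-by-coefficient in degrees $<\delta$ --- no bookkeeping of non-characteristic coefficients is required. After that, the identity $g_1(u)-g_2(\theta u)=\bigl(g_1(u)-g_2(u)\bigr)+\bigl(g_2(u)-g_2(\theta u)\bigr)$ reduces everything to the single-branch computation on $g_2$ plus a remainder of exact order $\delta$, and the coefficient of $u^{\beta_k}$ is $c_{\beta_k}(1-\theta^{\beta_k})$ with $c_{\beta_k}\neq 0$, vanishing precisely for $\theta\in G_{E_k}$, as you say. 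You should also record the small verification that the support of $g_2$ strictly below $\delta$ is contained in $E_q\mathbb{Z}$ even when $q=\tau$ (otherwise the $(\tau+1)$-st scaled characteristic exponents of the two branches would be forced to coincide, contradicting the maximality of $\tau$); with that in place the count $E_{k-1}-E_k$ versus $E_q$ in part (b) is exactly the cardinality computation you give.
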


 These two data combined give us $CoAM(X^{(1)}, X^{(2)},0)$. More precisely, the ordered sequence of contact auxiliary multiplicities  is:
   
\[  \begin{array}{ccccccc}
\underbrace{\beta_1=\cdots=\beta_1} & < & \underbrace{\beta_2=\cdots=\beta_2} & < \ \cdots \ < & \underbrace{\beta_q=\cdots=\beta_q} & < & \underbrace{\delta=\cdots=\delta} \\

e_0-e_1 \ times &  & e_1-e_2 \ times  &  & e_{q-1}-e_q \ times &  & e_q \ times 
 \end{array} \]

\begin{example}
 Let $(X,0) \subset (\C^2,0)$  be a reduced curve with two tangent branches parametrized by
\begin{align*}
  \varphi^{(1)}(u)&= \left( u^{12}, u^{18}+u^{33}+u^{34}\right) \\
  \varphi^{(2)}(u)&=\left( u^8, u^{12}+u^{20}+2u^{22} + u^{23} \right)
\end{align*} 
  Note that $m^{(2)}=8$, in order to compute the contact auxiliary multiplicities we need to compute the order of the series
  \[ \varphi^{(1)}(u^8)-\varphi^{(2)}( \theta u^{12}) 
  =\left(0, (1-\theta^4)u^{144} -\theta^4 u^{240} + (1-\theta^{6})u^{264} + u^{272} - \theta^7 u^{276}\right)\]
  with $\theta \in G_8$. Following the notation of Lemma \rm\ref{lemma3} \textit{we have $\tau=2$ with}
  \[ \beta_1=144 \hspace{0.5in} \textrm{and} \hspace{0.5in} \beta_2=264\]
  \[e_0=8, \ \  \;   e_1=gcd(8,12)=4 \ \ \; and \ \ \ e_2=gcd(8,12,22)=2.\]
  \textit{Now, for $\theta \in G_8 \setminus G_4$ we have that $m_{\theta}^{(1,2)}=\beta_1=144$, and for $\theta \in G_4$ 
  we have that $m_{\theta}^{(1,2)}=240$. This implies that $q=1$, $\delta=240$ and we have $e_0-e_1=4$ $\theta$'s that give us $\beta_1$ and
   $e_1=4$ $\theta$'s that give us $\delta$. Finally, we can easily calculate the intersection multiplicity between $(X^{(1)},0)$ and $(X^{(2)},0)$ as:}
   \[i(X^{(1)},X^{(2)})=\frac{144\cdot 4 + 240\cdot 4}{8} =192\]
   
\end{example}

Combining the three previous lemmas, we can proceed to prove Theorem \ref{theocurves}.\\

\begin{proof}(of Theorem \ref{theocurves}) Consider the plane curves $(X,0)=(X^{(1)} \cup \cdots \cup X^{(r)},0)$ and 
$(Y,0)=(Y^{(1)} \cup \cdots \cup Y^{(r')},0)$. Suppose that $r=r'$ and also that there is a bijection 
$\rho: \lbrace 1,\cdots, r \rbrace \rightarrow \lbrace 1,\cdots, r \rbrace $ preserving all the auxiliary multiplicities. 
By Lemmas \ref{lemma1} and \ref{lemma2}, the bijection $\rho$ also preserves the characteristic exponents and the 
intersection multiplicities of $(X,0)$. Hence, $(X,0)$ and $(Y,0)$ have the same topological type.

Conversely, if $(X,0)$ and $(Y,0)$ have the same topological type, then $r=r'$ and there exists a bijection $\rho$ on the set $\lbrace 1,\cdots, r \rbrace$ preserving characteristic exponents 
and intersection multiplicities. It is enough to consider the case where $r=2$, i.e, $(X,0)=(X^{(1)}\cup X^{(2)},0)$ and $(Y,0)=(Y^{(1)}\cup Y^{(2)},0)$.

Let $\varphi^{(1)}$ and $\varphi^{(2)}$ (respectively, $\psi^{(1)}$ and $\psi^{(2)}$) be a system of compatible Puiseux parametrizations for $(X,0)$ (respectively, $(Y,0)$). Let $m_{\theta}^{(1,2)}$ (respectively, $\textbf{m}_{\theta}^{(1,2)}$) be a contact auxiliary multiplicity of the pair $(X^{(1)}\cup X^{(2)},0)$ (respectively, $(Y^{(1)}\cup Y^{(2)},0)$). We have that $ m_{\theta}^{(1,2)}$ (respectively, $\textbf{m}_{\theta}^{(1,2)}$)  is defined as the order of the series

\begin{center}
$\varphi_2^{(1)}( u^{m^{(2)}})- \varphi_2^{(2)}\left(\theta u^{m^{(1)}}\right)$, $ \ \ \ $ (respectively, $ \ \psi_2^{(1)}( u^{ \textbf{m}^{(2)}})- \psi_2^{(2)}\left(\theta u^{\textbf{m}^{(1)}}\right)$.
\end{center}

 Note that under this reparametrization of the branches, the sequence of  characteristic exponents of the branch $(X^{(1)},0)$ (respectively, $(Y^{(1)},0)$) is multiplied by $m^{(2)}$ (respectively, $\textbf{m}^{(2)}$), and the sequence of  characteristic exponents of the branch $(X^{(2)},0)$ (respectively, $(Y^{(2)},0)$) is multiplied by $m^{(1)}$ (respectively, $\textbf{m}^{(1)}$).
 We will assume that these new sequences coincide up to order $\tau \geq 0$ (respectively, $\tau'\geq 0$). Meaning that if  $m^{(1)}<\alpha_1< \cdots < \alpha_s$ and $ m^{(2)} < \gamma_1 < \cdots < \gamma_l $ (respectively, $ \textbf{m}^{(1)} < \alpha_1' < \cdots < \alpha_{s'}'$ and
  $\textbf{m}^{(2)}<\gamma_1' < \cdots < \gamma_{l'}'$) are the characteristic exponents of $(X^{(1)},0)$ and $(X^{(2)},0)$ (respectively, $(Y^{(1)},0)$ and $(Y^{2},0)$), then:
   
  \begin{center}
$\alpha_1 m^{(2)}= \gamma_1 m^{(1)} =:\beta_1$, $ \ \ $ $\alpha_2 m^{(2)}= \gamma_2 m^{(1)}
=:\beta_2$, $ \ \ $  $ \ \cdots \ $ $ \ \ $, $\alpha_\tau m^{(2)} = \gamma_\tau m^{(1)} =: \beta_\tau$.\\

$ \ \ $\\

(respectively, $ \ \ $ $\alpha_1' \textbf{m}^{(2)}= \gamma_1' \textbf{m}^{(1)} =:\beta_1'$, $ \ \ $ $\alpha_2' \textbf{m}^{(2)}= \gamma_2' \textbf{m}^{(1)}
=:\beta_2'$, $ \ \ $  $ \ \cdots \ $ $ \ \ $, $\alpha_{\tau'}' \textbf{m}^{(2)} = \gamma_{\tau'}' \textbf{m}^{(1)} =: \beta_{\tau'}'$).
\end{center}

Lemma \ref{lemma1} implies that the set of characteristic auxiliary multiplicities of $(X^{(i)},0)$ and $(Y^{(i)},0)$ are the same for all $i$. In particular, by Lemma \ref{lemma1} we have that $\tau = \tau'$ and $\beta'_i=\beta_i$ for all $i\leq \tau$.

 Using the notation of Lemma \ref{lemma3}, we have that there exists an integer $q\geq 0$ (respectively, $q'\geq0$) such that the set of contact auxiliary multiplicities of the pair $(X^{(i)}\cup X^{(j)},0)$  (respectively, $(Y^{(i)}\cup Y^{(j)},0)$) is 
either $ \lbrace \delta \rbrace $ if $q=0$ (respectively, $\delta'$ if $q'=0$), or $\lbrace \beta_1,\cdots, \beta_q, \delta \rbrace$ if $q\geq 1$ (respectively, $\lbrace \beta_1,\cdots, \beta_{q'}, \delta' \rbrace$ if $q'\geq 1$).
 
Again by Lemma \ref{lemma3}, $q$ (respectively, $q'$) is determined by the intersection multiplicity and by the characteristic exponents of $(X^{(1)},0)$ and $(X^{(2)},0)$ (respectively, $(Y^{(1)},0)$ and $(Y^{(2)},0)$). Therefore, $q'=q$ and using Lemma \ref{lemma2} and Lemma \ref{lemma3}(b) and (c) one can conclude that $\delta'=\delta$. Therefore the ordered sequence of contact auxiliary multiplicities of $(X^{(1)},0)$ and  $(X^{(2)},0)$ and 
the one of $(Y^{(1)},0)$ and  $(Y^{(2)},0)$ are the same.\end{proof}

\begin{remark}\label{gooddef1}
\textit{Note that Theorem} \rm\ref{theocurves} \textit{and Lemmas} \rm\ref{lemma1}, \ref{lemma2} \textit{and} \rm\ref{lemma3} \textit{show in particular that for a plane curve $(X,0)$ the auxiliary multiplicities do not depend on the choice of the parametrization of the branches of $(X,0)$.}
\end{remark}

\subsection{Generic projections of space curves in $\mathbb{C}^2$}\label{genericproj}

$ \ \ \ \ $ Now we will prove that the auxiliary multiplicities of a space curve singularity are the same as those of its generic 
projection to $\mathbb{C}^2$. In this context the genericity we consider is characterized by transversality of the direction 
of the projection with the $C_5$-cone of the space curve singularity. More precisely:

\begin{definition}\label{defgenericproj}
Let $(X,0)$ be a germ of curve in $(\mathbb{C}^n,0)$. Let $\pi:(\mathbb{C}^n,0)\rightarrow (\mathbb{C}^2,0)$ be a linear projection. 
We say that the projection $\pi$ is $C_5$-generic for the germ of curve $(X,0)$ if the kernel of $\pi$ intersects transversally 
the cone $C_5(X,0)$; that is, $ker(\pi)\cap C_5(X,0)=\{0\}$. 

When $\pi$ is $C_5$-generic for $(X,0)$, we will denote by $(\tilde{X},0)$ the image $\pi(X,0)$, and we will call it the 
$C_5$-generic projection of $(X,0)$ \rm(\textit{see} \rm\cite[\textit{Chap.} IV]{briancon}).
\end{definition}

\begin{remark}\label{preserves tangency}
We will see in the proof of Proposition \rm\ref{propgeneric} \textit{that a $C_5$-generic projection in the sense of Definition} \rm\ref{defgenericproj}, \textit{preserves multiplicities of the branches, that is: a curve and its $C_5$-generic projection have the same multiplicity.}

\textit{We can also notice that if two branches of a curve are non-tangent then neither are the branches of its $C_5$-generic projection. In fact, if the images by a linear projection of two non-tangent branches are tangent plane curves, 
then the two dimensional plane, generated by the tangent lines of both branches, intersects the kernel of the projection. 
We have seen in Theorem} \rm\ref{generalcase1}, \textit{that 
the plane generated by both tangents is a plane of the $C_5$-cone of the space curve. So the projection is not $C_5$-generic.}   
\end{remark}

\begin{proposition}\label{propgeneric} Let $(X,0)$ be a germ of curve in $\mathbb{C}^n$, $\pi:(\mathbb{C}^n,0)\rightarrow (\mathbb{C}^2,0)$ be a linear projection. Then, $\pi$ is $C_5$-generic if and only if

\begin{center}
 $ChAM(X,0)=ChAM(\pi(X),0) $ $ \ \ \ $ and $ \ \ \ $ $CoAM(X,0)=CoAM(\pi(X),0)$.
\end{center}

\end{proposition}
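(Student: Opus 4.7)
The plan is to reduce the proposition to a plane-by-plane equivalence: for each two-dimensional plane $H$ appearing in the decomposition of $C_5(X,0)$ given by Theorem \ref{generalcase1}, the restriction $\pi|_H$ is injective if and only if the auxiliary multiplicity attached to $H$ is preserved by $\pi$. Since $\pi$ is $C_5$-generic exactly when $\ker\pi\cap H=\{0\}$ for every such $H$, and preservation of $ChAM(X,0)$ and $CoAM(X,0)$ amounts to preservation of each individual auxiliary multiplicity, the proposition follows at once from this equivalence.

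By a linear change of basis in $\mathbb{C}^n$ I may assume that $\pi$ is the standard projection $(x_1,\ldots,x_n)\mapsto(x_1,x_2)$. Consider a singular branch $X^{(i)}$ with tangent direction $w^{(i)}$. If $\pi(w^{(i)})=0$, then $w^{(i)}\in\ker\pi\cap H^{(i)}_\theta$ and $m(\pi(X^{(i)}),0)>m^{(i)}$, so both $\pi|_{H^{(i)}_\theta}$-injectivity and preservation of $m^{(i)}\in ChAM(X^{(i)})$ fail. If $\pi(w^{(i)})\neq 0$, after a linear change of basis in the codomain $\mathbb{C}^2$ I may assume $\pi(w^{(i)})=(1,0)$ and choose a Puiseux parametrization of $X^{(i)}$ with special coordinate $x_1$, which is possible because $w^{(i)}_1\neq 0$. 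Then $\pi\circ\varphi^{(i)}(u)=(u^{m^{(i)}},\varphi^{(i)}_2(u))$ is itself a Puiseux parametrization of $\pi(X^{(i)})$, and
\[
\tilde m^{(i)}_\theta=\mathrm{ord}_0\bigl(\varphi^{(i)}_2(u)-\varphi^{(i)}_2(\theta u)\bigr)\;\geq\;\min_{j\geq 2}\mathrm{ord}_0\bigl(\varphi^{(i)}_j(u)-\varphi^{(i)}_j(\theta u)\bigr)=m^{(i)}_\theta,
\]
with equality if and only if the $x_2$-component of the leading coefficient vector $v^{(i)}_\theta$ of $\phi^{(i)}_\theta$ is nonzero. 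Since the $x_1$-component of $v^{(i)}_\theta$ always vanishes, this last condition is exactly that $\pi(w^{(i)})=(1,0)$ and $\pi(v^{(i)}_\theta)$ be linearly independent in $\mathbb{C}^2$, i.e., that $\pi|_{H^{(i)}_\theta}$ be injective.

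The same strategy handles the contact cases. For a tangent pair $(i,j)\in T(X)$, a compatible system of Puiseux parametrizations aligns the special coordinates of both branches with $x_1$ after the codomain adjustment; the analogous computation applied to the contact parametrization $\phi^{(i,j)}_\theta$ shows that $m^{(i,j)}_\theta$ is preserved if and only if $\pi|_{H^{(i,j)}_\theta}$ is injective. For a non-tangent pair $(i,j)\in NT(X)$, Remark \ref{remarkpham}(c) tells us that all contact auxiliary multiplicities equal $m^{(i,j)}=\mathrm{lcm}(m^{(i)},m^{(j)})$; preservation of this value amounts to $\pi(X^{(i)})$ and $\pi(X^{(j)})$ remaining non-tangent in $\mathbb{C}^2$ (cf.\ Remark \ref{preserves tangency}), which is in turn equivalent to $\pi(w^{(i)})$ and $\pi(w^{(j)})$ being linearly independent, i.e., to $\pi|_{H^{(i,j)}}$ being injective.

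The main technical obstacle is coordinate bookkeeping: to compare $m^{(i)}_\theta$ and $\tilde m^{(i)}_\theta$ term by term, one needs a Puiseux parametrization of $\pi(X^{(i)})$ obtained by simply post-composing a well-chosen Puiseux parametrization of $X^{(i)}$ with $\pi$. This requires the special coordinate of the parametrization of $X^{(i)}$ to be one of the coordinates targeted by $\pi$, which is possible precisely when $\pi(w^{(i)})\neq 0$; for tangent pairs, this must be arranged simultaneously for both branches via a compatible system. Once the coordinates are aligned, the equivalence between injectivity on $H$ and preservation of the corresponding auxiliary multiplicity reduces to the elementary observation that a leading direction of an auxiliary parametrization is annihilated by $\pi$ modulo the tangent of the branch if and only if the corresponding order strictly increases under projection.
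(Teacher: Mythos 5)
Your forward implication ($C_5$-generic $\Rightarrow$ preservation of the auxiliary multiplicities) is essentially the paper's argument: post-compose a Puiseux parametrization with $\pi$ and observe that the leading coefficient of each auxiliary parametrization survives precisely because the tangent vector $v_\theta$ of the corresponding auxiliary curve lies in $C_5(X,0)$, hence not in $\ker\pi$. That half is fine.

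The converse is where you depart from the paper, and where there is a genuine gap. The paper argues globally: if a non-$C_5$-generic $\pi$ preserved all auxiliary multiplicities, then by transitivity $\pi(X)$ and the $C_5$-generic plane projection $\tilde{X}$ would share all auxiliary multiplicities, hence by Theorem \ref{theocurves} the same topological type and the same Milnor number, contradicting $\mu(\tilde{X},0)<\mu(\pi(X),0)$ from \cite[Prop.\ IV.2]{briancon}. Your plane-by-plane substitute needs the quantities $\mathrm{ord}_0\bigl(\varphi^{(i)}_2(u)-\varphi^{(i)}_2(\theta u)\bigr)$ to actually \emph{be} the characteristic auxiliary multiplicities of $\pi(X^{(i)})$, and that requires $\pi\circ\varphi^{(i)}$ to be a \emph{primitive} parametrization of the image --- which is exactly what can fail when $\pi$ is not $C_5$-generic. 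Take $\varphi(u)=(u^2,u^4,u^5)$ and $\pi(x)=(x_1,x_2)$: here $\ker\pi$ is the $x_3$-axis, which lies in $H_{-1}=\langle (1,0,0),(0,0,1)\rangle$, so $\pi$ is not generic; but $\pi\circ\varphi=(u^2,u^4)$ is not primitive, the image is smooth, and your formula returns $\mathrm{ord}_0(0)=\infty$ rather than any auxiliary multiplicity of $\pi(X)$. Likewise your claim that $\pi(w^{(i)})=0$ forces $m(\pi(X^{(i)}),0)>m^{(i)}$ is false: for $\varphi(u)=(u^3,u^6,u^8)$ and projection onto $(x_2,x_3)$ the image is parametrized primitively by $(v^3,v^4)$, again of multiplicity $3$. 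In both examples the conclusion of the proposition still holds, but not by your computation; repairing the argument means reparametrizing the image primitively, reindexing the roots of unity and dividing all orders by the degree of $X^{(i)}\rightarrow\pi(X^{(i)})$ (and, for $CoAM$, handling branches whose images coincide or become tangent). Once that bookkeeping is confronted, the paper's Milnor-number shortcut is the cleaner route.
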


\begin{proof} ($\Rightarrow$) We will suppose first that $(X,0)$ is irreducible. Let $\varphi$ be a Puiseux parametrization of $(X,0)$:

\begin{center}
$\varphi(u):= \left( u^m, \displaystyle { \sum_{i> m}^{}}a_{2,i}u^i \ , \  \displaystyle { \sum_{i> m}^{}}a_{3,i}u^i, \   
\cdots \ , \ \displaystyle { \sum_{i>m}^{}}a_{n,i}u^i \right)$. 
\end{center}

\noindent Then, for $\theta \neq 1 \in G_m$, the auxiliary parametrization $\phi_{\theta}$ has the form:

\begin{center}
$\phi_{\theta}(u)= \left(\displaystyle {0, a_{2,k_{\theta}}(1-\theta^{k_{\theta}})u^{k_{\theta}}+ 
\sum_{i> k_{\theta}}^{}}a_{2,i}(1-\theta^i)u^i \ , \   \cdots \ , \ a_{n,k_{\theta}}(1-\theta^{k_{\theta}})u^{k_{\theta}} 
+ \displaystyle { \sum_{i>k_{\theta}}^{}}a_{n,i}(1-\theta^i)u^i \right)$, 
\end{center}

\noindent where $k_{\theta}= \ min \lbrace \ i \ | \ $  there exists $j$ with $ a_{j,i}(1-\theta^i)\neq 0 \rbrace$. 
In particular, $a_{j,k_{\theta}} \neq 0$ for some $j$, and therefore the characteristic auxiliary multiplicity  
$m_{\theta}=k_{\theta}$. 

Now consider a projection $\pi: (\mathbb{C}^n,0)\rightarrow (\mathbb{C}^2,0)$ defined by 
$\pi(x_1,x_2,\cdots,x_n):=(x_1,\lambda_2 x_2+\cdots \lambda_n x_n)$. Since $C_3(X,0)$ is generated by $(1,0, \ldots, 0)$ and 
$C_3(X,0) \subset C_5(X,0)$, for generic values of $\lambda_2,\cdots,\lambda_n$ the projection $\pi$ is $C_5$-generic for $(X,0)$. 

Suppose $\pi$ is $C_5$-generic for $(X,0)$, then $\pi$ induces a Puiseux parametrization for the $C_5$-generic projection $(\tilde{X},0)$ 
given by $\tilde{\varphi}= \pi \circ \varphi$. Furthermore, the auxiliary parametrization $\tilde{\phi}_{\theta}$ associated 
to $(\tilde{X},0)$ has the form:

\begin{center}
$\tilde{\phi}_{\theta}(u)= \left(\displaystyle {0, (\lambda_2 a_{2,k_{\theta}}+ \cdots + \lambda_n a_{n,k_{\theta}})(1-\theta^{k_{\theta}})u^{k_{\theta}}+ \sum_{i> k_{\theta}}^{}} (\lambda_2 a_{2,i}+ \cdots + \lambda_n a_{n,i})(1-\theta^i)u^i \right).$ 
\end{center}

\noindent Note that if $(\lambda_2 a_{2,k_{\theta}}+ \cdots + \lambda_n a_{n,k_{\theta}})\neq 0$, then the order of the second 
coordinate of $\tilde{\phi}_{\theta}$ is $k_{\theta}$ which is then the characteristic auxiliary multiplicity associated to $(\tilde{X},0)$ 
and $\theta$, and the result follows. Notice that in particular, the multiplicity of a curve and its $C_5$-generic projection are 
the same, as claimed in Remark \ref{preserves tangency}.

Let us now prove that $(\lambda_2 a_{2,k_{\theta}}+ \cdots + \lambda_n a_{n,k_{\theta}})$ is in fact non-zero. Suppose 
$(\lambda_2 a_{2,k_{\theta}}+ \cdots + \lambda_n a_{n,k_{\theta}})=0$, this means that the line generated by the vector 
$ v_{\theta}=(0, a_{2,k_{\theta}}, \cdots , a_{n,k_{\theta}})$ is in $ker(\pi)$. 
Note that the vector $v_{\theta}$ generates the cone $C_3(A_{\theta}(X,0))$ of the auxiliary characteristic curve $A_{\theta}(X,0)$.  By Theorem \ref{generalcase1}, $v_{\theta} \in C_5(X,0)$, which contradicts that $\pi$ is $C_5$-generic for $(X,0)$.

For the case where $(X^{(1)},0)$ and $(X^{(2)},0)$ are not tangent, we saw in Remark \ref{preserves tangency} that 
their images by a $C_5$-generic projection are not tangent and the multiplicities are the same. Thus the proof follows by Lemma \ref{lemma2}. The proof of the case of two tangent branches is analogous to the proof of the irreducible case.\\

$(\Leftarrow)$ Suppose that $ChAM(X,0)=ChAM(\pi(X),0)$ and  $CoAM(X,0)=CoAM(\pi(X),0)$ holds for a non-$C_5$-generic linear projection $\pi$. Then by transitivity, we have that $ChAM(\pi(X),0)=ChAM(\tilde{X},0)$ and  $CoAM(\pi(X),0)=CoAM(\tilde{X},0)$. In particular, we have the equality between Milnor numbers,

\begin{center}
$\mu(\pi(X),0)=\mu(\tilde{X},0)$,
\end{center}

\noindent which is a contradiction since $\mu(\tilde{X},0)<\mu(\pi(X),0)$ by \cite[Prop. IV.2]{briancon} or \cite[Prop. 8.4.6]{GiSiTe}.\end{proof}

\begin{remark}\label{gooddef2} Since all $C_5$-generic projections of a germ of curve have the same topological type 
(see \rm\cite[\textit{Prop}. 8.4.6]{GiSiTe} \textit{or} \rm\cite[\textit{Prop}. IV.2]{briancon}) \textit{then by Proposition} \rm\ref{propgeneric} \textit{and Remark} 
\rm\ref{gooddef1} \textit{we have that the auxiliary multiplicities do not depend on the choice of the 
compatible system of Puiseux parametrizations of $(X,0)$.}
\end{remark}

\subsection{Bi-Lipschitz equivalence between curves}\label{bilipschitzeq}

$ \ \ \ \ $ In this section we study the bi-Lipschitz equivalence between germs of curves. We will consider only the outer metric, i.e., the metric induced by the one on the ambient space.
We say that two germs of curves $(X,0)$ and $(Y,0)$ in $(\mathbb{C}^n,0)$ are bi-Lipschitz equivalent if there exists a 
germ of bi-Lipschitz homeomorphism $h:(X,0)\rightarrow (Y,0)$ (that is, $h$ and $h^{-1}$ are Lipschitz maps).

As a consequence of the previous results, we will prove in this section that the auxiliary multiplicities can be used to 
determine when two curves in $(\mathbb{C}^n,0)$ are bi-Lipschitz equivalent.

\begin{theorem}\label{bilipequivalence} Let $(X,0)=(X^{(1)} \cup \cdots \cup X^{(r)},0) \subset (\mathbb{C}^n,0)$ and $(Y,0)=(Y^{(1)} \cup \cdots \cup Y^{(r')},0) \subset (\mathbb{C}^{n'},0)$ be germs of curves. Then $(X,0)$ and $(Y,0)$ are bi-Lipschitz equivalent if and only if $r=r'$ and there exists a bijection 
$\rho: \lbrace 1,\cdots, r \rbrace \rightarrow \lbrace 1,\cdots, r' \rbrace $ preserving all the auxiliary multiplicities, that is:

\begin{center}
$\text{ChAM}(X^{(i)},0))=\text{ChAM}(Y^{(\rho(i))},0)$,  $  \ \ \ \ $   and 
$ \ \ \ \ $  $\text{CoAM}(X^{(i)},X^{(j)})=\text{CoAM}(Y^{(\rho(i))},Y^{(\rho(j))})$,
\end{center} 

\noindent for all $i\in S(X)$ and $(i,j)\in T(X)$ or $(i,j) \in NT(X)$. 
\end{theorem}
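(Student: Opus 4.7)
The plan is to reduce the statement to the case of plane curves by projecting both $(X,0)$ and $(Y,0)$ $C_5$-generically to $(\mathbb{C}^2,0)$, and then invoke the results of Section \ref{planecurves} together with Teissier's bi-Lipschitz projection theorem. Concretely, fix linear projections $\pi_X:(\mathbb{C}^n,0)\to (\mathbb{C}^2,0)$ and $\pi_Y:(\mathbb{C}^{n'},0)\to(\mathbb{C}^2,0)$ which are $C_5$-generic for $(X,0)$ and $(Y,0)$ respectively, and let $(\tilde X,0)=\pi_X(X,0)$, $(\tilde Y,0)=\pi_Y(Y,0)$. By the Teissier theorem cited in the introduction (\cite[Th.~5.1]{pichon}), the maps $\pi_X|_X:(X,0)\to(\tilde X,0)$ and $\pi_Y|_Y:(Y,0)\to(\tilde Y,0)$ are bi-Lipschitz homeomorphisms for the outer metric. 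Moreover, by Proposition \ref{propgeneric}, projection preserves all auxiliary multiplicities, so the characteristic and contact auxiliary multiplicities of $(X,0)$ and $(\tilde X,0)$ coincide, and similarly for $(Y,0)$ and $(\tilde Y,0)$.

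For the direct implication, assume $(X,0)$ and $(Y,0)$ are bi-Lipschitz equivalent via $h:(X,0)\to(Y,0)$. Composing $\pi_X|_X^{-1}$, $h$ and $\pi_Y|_Y$ gives a bi-Lipschitz homeomorphism $\tilde h:(\tilde X,0)\to(\tilde Y,0)$ between the two plane curves. Since for plane curve singularities the bi-Lipschitz type coincides with the topological type (the classical Pham--Teissier result in \cite{pham}), $(\tilde X,0)$ and $(\tilde Y,0)$ have the same topological type. Theorem \ref{theocurves} then supplies a bijection $\rho$ between their branches preserving all $ChAM$ and $CoAM$ invariants. Because $\pi_X$ and $\pi_Y$ induce bijections between the branches of $X$ and $\tilde X$, resp.\ $Y$ and $\tilde Y$, and because these bijections preserve the auxiliary multiplicities (Proposition \ref{propgeneric}), transporting $\rho$ back produces the required bijection for $(X,0)$ and $(Y,0)$.

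For the converse, suppose a bijection $\rho$ between the branches of $(X,0)$ and $(Y,0)$ preserves all $ChAM$ and $CoAM$. Applying Proposition \ref{propgeneric} at both ends, the same bijection $\rho$ preserves the auxiliary multiplicities of the $C_5$-generic projections $(\tilde X,0)$ and $(\tilde Y,0)$. Theorem \ref{theocurves} then tells us that these two plane curves share the same topological type, and again by Pham--Teissier they are bi-Lipschitz equivalent for the outer metric via some $\tilde h:(\tilde X,0)\to(\tilde Y,0)$. The composition $(\pi_Y|_Y)^{-1}\circ \tilde h\circ \pi_X|_X$ is then a composition of outer bi-Lipschitz maps, and hence yields a bi-Lipschitz equivalence $(X,0)\to(Y,0)$.

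The main conceptual obstacle is ensuring that the entire argument genuinely rests only on results already available, and in particular the invocation of the equivalence ``bi-Lipschitz $\Longleftrightarrow$ topological'' for plane curve germs, which is needed in both directions; this is where the paper must cite Pham--Teissier carefully. A secondary technical point that I would check is that the bijection between branches induced by a $C_5$-generic projection is indeed the identification under which Proposition \ref{propgeneric} transports auxiliary multiplicities (so that the $\rho$ on the plane-curve side corresponds, after pullback, to the original $\rho$ on the space-curve side); once this identification is set up, the rest is a straightforward diagram-chase of bi-Lipschitz maps.
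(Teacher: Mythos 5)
Your proposal is correct and follows essentially the same route as the paper: project both curves $C_5$-generically to the plane, use Teissier's theorem that these projections are outer bi-Lipschitz, transport auxiliary multiplicities via Proposition \ref{propgeneric}, and conclude with Theorem \ref{theocurves} together with the equivalence of bi-Lipschitz and topological type for plane curve germs. Your explicit flagging of the ``bi-Lipschitz $\Leftrightarrow$ topological'' step for plane curves is in fact slightly more careful than the paper's own write-up, which leaves that invocation implicit in the forward direction.
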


\begin{proof} Let $\pi_1:(X,0)\rightarrow (\tilde{X},0)$ and $\pi_2:(Y,0)\rightarrow (\tilde{Y},0)$ be $C_5$-generic projections of $(X,0)$ and $(Y,0)$, respectively. Note that $\pi_1$ and $\pi_2$ are bi-Lipschitz homeomorphisms (\cite{Teissier2}, see also \cite{pichon}, Th. $5.1$).

Suppose that $(X,0)$ and $(Y,0)$ are bi-Lipschitz equivalent and let $h:(X,0)\rightarrow (Y,0)$ be a bi-Lipschitz homeomorphism, so $r=r'$. Note that the map $\pi_2 \circ h \circ \pi_1^{-1}:(\tilde{X},0)\rightarrow (\tilde{Y},0)$ is a bi-Lipschitz homeomorphism. By Theorem \ref{theocurves} and Proposition \ref{propgeneric} we can construct a bijection $\rho: \lbrace 1,\cdots, r \rbrace \rightarrow \lbrace 1,\cdots, r \rbrace $ preserving all the auxiliary multiplicities.

Suppose now that $r=r'$ and there exists a bijection $\rho: \lbrace 1,\cdots, r \rbrace \rightarrow \lbrace 1,\cdots, r \rbrace $
preserving all the auxiliary multiplicities. By Proposition \ref{propgeneric} and Theorem \ref{theocurves}
we have that $(\tilde{X},0)$ and $(\tilde{Y},0)$ have the same topological type. 
Since they are plane curves there exists a bi-Lipschitz homeomorphism $\tilde{h}:(\tilde{X},0)\rightarrow (\tilde{Y},0)$. 
Note that the map $\pi_{2}^{-1} \circ \tilde{h} \circ \pi_{1}:(X,0)\rightarrow (Y,0)$ is a bi-Lipschitz homeomorphism, 
therefore $(X,0)$ and $(Y,0)$ are bi-Lipschitz equivalent, as desired.\end{proof}

\section{Upper bounds}\label{bounds}

$ \ \ \ \ $ In this section we present some upper bounds for the number of irreducible components of $C_5(X,0)$. \\

For a finite subset $A$ of $\mathbb{N}$ or $ \mathbb{N}^2$ the notation $\sharp \lbrace A \rbrace$ means the number of elements of $A$. For an analytic set (or germ of analytic set) $W \subset \mathbb{C}^n$ the notation $\sharp \lbrace Irred(W) \rbrace$ means the number of irreducible components of $W$ (or the germ $(W,0)$). As a direct consequence of Theorem \ref{generalcase1} we have the following bound: 

\begin{center}
 $\sharp \lbrace Irred(C_5(X,0)) \rbrace \leq  \left(  \displaystyle { \sum_{i\in S(X)}^{}} (m^{(i)}-1) \right) +  \left( \displaystyle { \sum_
 {\begin{array}{c} 
 {\footnotesize (i,j)\in T(X)} \\
  {\footnotesize m^{(j)} \leq m^{(i)}}
  \end{array}}} m^{(j)} \right) + \sharp \lbrace NT(X) \rbrace$.
\end{center}

  However, Krasi\'nsky proved in \cite[Cor. 3.6]{Krasinsky} that for a pair of tangent branches $(X,0)$ and $(Y,0)$ the amount of planes of the $C_5$-cone arising
  from taking sequences in different branches is bounded above by the greatest common divisor of the multiplicities of the branches. This gives us the following
  bound. 

\begin{corollary}\label{bound1} \textbf{(Upper bound $1$)} Let $(X,0)=(X^1\cup \cdots \cup X^r,0)$ be a germ of singular curve in $(\mathbb{C}^n,0)$. Then:

\begin{center}
 $\sharp \lbrace Irred(C_5(X,0)) \rbrace \leq  \left(  \displaystyle { \sum_{i\in S(X)}^{}} (m^{(i)}-1) \right) +  \left( \displaystyle { \sum_{(i,j)\in T(X)}^{}} 
 \textrm{gcd}\left(m^{(i)}, m^{(j)} \right) \right) + \sharp \lbrace NT(X) \rbrace$.
\end{center}

\noindent In particular, if $(X,0)$ is irreducible, then $\sharp \lbrace Irred(C_5(X,0)) \rbrace \leq m(X,0)-1$.
\end{corollary}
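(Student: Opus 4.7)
The plan is to apply Theorem \ref{generalcase1} directly. That theorem expresses
$$C_5(X,0) = \bigcup_{\substack{i\in S(X) \\ \theta \in G_{m^{(i)}}\setminus\{1\}}} H^{(i)}_\theta \;\cup\; \bigcup_{\substack{(i,j)\in T(X) \\ \theta \in G_{m^{(i,j)}}}} H^{(i,j)}_\theta \;\cup\; \bigcup_{(i,j)\in NT(X)} H^{(i,j)},$$
where every term in the union is an irreducible $2$-plane through the origin. Since any irreducible component of $C_5(X,0)$ must coincide with one of these planes, the number of such components is at most the total number of indexing triples, with the inequality possibly being strict because (as explicitly noted at the end of Theorem \ref{generalcase1}) different indices can produce the same plane.

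Having reduced the problem to counting the indexing set, I would estimate each of the three contributions separately. For each $i \in S(X)$, the index set $G_{m^{(i)}} \setminus \{1\}$ has exactly $m^{(i)} - 1$ elements, so Step $1$ of Theorem \ref{generalcase1} contributes at most $\sum_{i \in S(X)} (m^{(i)} - 1)$ planes. For each tangent pair $(i,j) \in T(X)$, the group $G_{m^{(i,j)}}$ has $m^{(i,j)}$ elements, so Step $2$ contributes at most $\sum_{(i,j) \in T(X)} m^{(i,j)}$ planes. Finally, Remark \ref{remarkpham}(c) says that for a non-tangent pair the plane $H^{(i,j)}_\theta = H^{(i,j)}$ does not depend on $\theta$, so Step $3$ contributes exactly one plane per element of $NT(X)$, i.e.\ $\sharp\{NT(X)\}$ planes in total. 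Summing the three contributions yields the stated upper bound.

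For the second assertion, when $r=1$ and $(X,0)$ is singular we have $S(X) = \{1\}$ while $T(X)$ and $NT(X)$ are both empty, so the bound collapses to $m^{(1)} - 1 = m(X,0) - 1$, as claimed.

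There is no real obstacle; the corollary is essentially a bookkeeping consequence of Theorem \ref{generalcase1}. The only thing that needs to be handled with mild care is the non-tangent case, where one must invoke Remark \ref{remarkpham}(c) to avoid over-counting by a factor of $m^{(i,j)}$ — otherwise one would wrongly obtain a term $\sum_{(i,j)\in NT(X)} m^{(i,j)}$ instead of $\sharp\{NT(X)\}$ in the third summand.
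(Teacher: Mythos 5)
Your proof is correct and is essentially the argument the paper intends: the corollary is stated there as a direct consequence of Theorem \ref{generalcase1}, obtained precisely by counting the indexing sets of the three unions ($m^{(i)}-1$ roots for each singular branch, $m^{(i,j)}$ roots for each tangent pair, and one plane per non-tangent pair). Your additional care with Remark \ref{remarkpham}(c) in the non-tangent case is consistent with the theorem's own formulation of Step $3$, which is already indexed only by the pair and not by $\theta$.
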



In the irreducible case, the following lemma shows both, this upper bound and the $C_5$-procedure can be improved.

\begin{lemma}\label{ordroots} Let $(X,0)$ be an irreducible germ of singular curve with multiplicity $m$ and take $\theta_{p},\theta_q \in G_m \setminus \lbrace 1 \rbrace$. If $ord(\theta_p)=ord(\theta_q)$, then the planes $H_{\theta_p}$ and $H_{\theta_q}$ are the same (as complex vector spaces).
\end{lemma}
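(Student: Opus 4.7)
The plan is to exploit the explicit formulas from the proof of Lemma \ref{irreduciblecase}. Fix a Puiseux parametrization
$$\varphi(u) = \bigl(u^m,\ \sum_{i>m} a_{2,i} u^i,\ \ldots,\ \sum_{i>m} a_{n,i} u^i\bigr),$$
so that $C_3(X,0)$ is spanned by $w = (1,0,\ldots,0)$ and, for any $\theta \neq 1$ in $G_m$, the tangent cone $C_3(A_\theta(X),0)$ is spanned by
$$v_\theta = (0, a_{2,k_\theta},\ldots, a_{n,k_\theta}),$$
where $k_\theta$ is the smallest integer $k > m$ such that $a_{j,k}(1-\theta^k) \neq 0$ for some index $j \geq 2$. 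By construction $H_\theta = \langle w, v_\theta\rangle$, so it would suffice to show that both $k_\theta$ and the vector $v_\theta$ depend only on $ord(\theta)$.

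The key observation I would use is purely arithmetic: for any positive integer $k$ and any $\theta \in G_m$, one has $\theta^k = 1$ if and only if $ord(\theta)$ divides $k$. Hence, if $ord(\theta_p) = ord(\theta_q) = d$, the set of pairs $(j,k)$ for which $a_{j,k}(1 - \theta^k) \neq 0$ is the same for $\theta_p$ as for $\theta_q$; namely
$$\{(j,k) : j \geq 2,\ a_{j,k} \neq 0,\ d \nmid k\}.$$
Taking the minimum second coordinate then gives $k_{\theta_p} = k_{\theta_q} =: k_0$, and substituting back yields $v_{\theta_p} = (0, a_{2,k_0},\ldots, a_{n,k_0}) = v_{\theta_q}$. (Note that this minimum is attained because the parametrization is primitive: otherwise every exponent appearing in $\varphi$ would be a multiple of $d > 1$, and the map would factor through $u \mapsto u^d$.)

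Consequently $H_{\theta_p} = \langle w, v_{\theta_p}\rangle = \langle w, v_{\theta_q}\rangle = H_{\theta_q}$, which is the desired conclusion. I do not expect a genuine obstacle in the argument; it is essentially a bookkeeping observation reducing to the fact that the vanishing pattern of $\{1 - \theta^k\}_k$ depends only on the order of $\theta$ in $G_m$. As a by-product one obtains that the characteristic auxiliary multiplicity $m_\theta = k_0$ also depends only on $ord(\theta)$, which is consistent with the description of $ChAM(X,0)$ via characteristic exponents recalled in Section \ref{planecurves}.
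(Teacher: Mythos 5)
Your proposal is correct and follows essentially the same route as the paper's proof: both reduce the statement to the observation that the vanishing pattern of $1-\theta^k$ depends only on $ord(\theta)$, deduce $k_{\theta_p}=k_{\theta_q}$, and conclude that $v_{\theta_p}$ and $v_{\theta_q}$ span the same line, hence $H_{\theta_p}=H_{\theta_q}$. Your parenthetical justification that the minimum is attained (via primitivity of the parametrization) is a small point the paper leaves implicit, but otherwise the arguments coincide.
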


\begin{proof} Let ${\varphi}$ be a Puiseux parametrization of $({X},0)$ defined by

\begin{center}
${\varphi}(u):= \left( u^m, \displaystyle { \sum_{i> m}^{}}a_{1,i}u^i \ , \  \displaystyle { \sum_{i> m}^{}}a_{2,i}u^i, \   \cdots \ , \ \displaystyle { \sum_{i>m}^{}}a_{n,i}u^i \right)$. 
\end{center}

\noindent For $\theta_p \in G_m \setminus \{1\} $, the auxiliary parametrization $\phi_{\theta_p}$ has the following form:

\begin{center}
$\phi_{\theta_p}(u)= \left(\displaystyle {0, a_{2,k_{p}}(1-\theta_p^{k_{p}})u^{k_{p}}+ \sum_{i> k_{p}}^{}}a_{2,i}(1-\theta_p^i)u^i \ , \   \cdots \ , \ a_{n,k_{p}}(1-\theta_p^{k_{p}})u^{k_{p}} + \displaystyle { \sum_{i>k_{p}}^{}}a_{n,i}(1-\theta_p^i)u^i \right)$, 
\end{center}

\noindent where $k_{p}= \ min \lbrace \ i \ | \ $  there exists $j$ with $ a_{j,i}(1-\theta_p^i)\neq 0 \rbrace$. In particular, $a_{j,k_{p}} \neq 0$ for some $j$. Hence, the $C_3(A_{\theta_p}(X),0)$ is generated by the vector $v_{\theta_p}:=(0,a_{2,k_{p}}(1-\theta_p^{k_{p}}), \cdots, a_{n,k_{p}}(1-\theta_p^{k_{p}}))$. Since $ord(\theta_p)=ord(\theta_q)$, $\theta_{p}^s=1$ if and only if $\theta_{q}^s=1$. Therefore, we have that

\begin{center}
$k_p=min \lbrace \ i \ | \ $ $\exists j$ with $ a_{j,i}\neq 0$ and $\theta_p^i\neq 1 \rbrace$ $ = $ $min \lbrace \ i \ | \ $ $\exists j$ with $ a_{j,i}\neq 0$ and $\theta_q^i\neq 1 \rbrace$. 
\end{center}

\noindent Hence, the vectors $v_{\theta_p}$ and $v_{\theta_q}$ have the same direction. It follows that $H_{\theta_p}$ and $H_{\theta_q}$ are the same plane.
\end{proof} \\

 For our next bound we will use the following notation. We say that a sequence of positive integers $d_1=n,d_2,d_3,\cdots, d_s= 1$, such that $d_{i+1}$ divides $d_i$ is a sequence of nested divisors of $n$. 

Consider the function $ \sigma :\mathbb{N} \setminus \lbrace 0 \rbrace \rightarrow \mathbb{N} \setminus \lbrace 0 \rbrace$ defined as 

\begin{center}
$\sigma(n)=$ the maximum length of sequences of nested divisors of $n$. 
\end{center}

\begin{remark}\label{nested}
\noindent If $(X,0)$ is a germ of irreducible plane curve of multiplicity $m$,
then $\sigma(m)$ is the maximum number of characteristic exponents that $(X,0)$ can have. Recall that if $\theta \in G_m$ and $ord(\theta)=d$, then $d$ is a (positive) divisor of $m$. Since $G_m$ is a cyclic group, the converse is also true, that is, if $d$ is a positive divisor of $m$, then there exists $\theta \in G_m$ such that $ord(\theta)=d$. Hence, we have another upper bound for $\sharp \lbrace Irred(C_5(X,0)) \rbrace$ which improves Upper bound $1$.

\end{remark}

\begin{proposition}\label{bound2} \textbf{(Upper bound $2$)} Let $(X,0)=(X^1 \cup \cdots \cup X^r,0)$ be a germ of singular curve in $(\mathbb{C}^n,0)$. Then

\begin{center}
 $\sharp \lbrace Irred(C_5(X,0)) \rbrace \leq  \left(  \displaystyle { \sum_{i\in S(X)}^{}} \sigma(m^{(i)})-1 \right) +  \left( \displaystyle { \sum_{(i,j)\in T(X)}^{}} 
  \textrm{gcd}\left(m^{(i)}, m^{(j)} \right) \right) + \sharp \lbrace NT(X) \rbrace$.
\end{center}

\noindent In particular, if $(X,0)$ is irreducible, then $\sharp \lbrace Irred(C_5(X,0)) \rbrace \leq \sigma(m(X,0))-1$.
\end{proposition}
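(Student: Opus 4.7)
The plan is to refine Corollary \ref{bound1} by improving only the contribution of Step 1 of the $C_5$-procedure (Theorem \ref{generalcase1}) for each singular branch, while leaving the tangent and non-tangent contributions (Steps 2 and 3) untouched. Concretely, for every $i \in S(X)$ I will show that the number of distinct planes $H^{(i)}_\theta$, $\theta \in G_{m^{(i)}} \setminus \{1\}$, is at most $\sigma(m^{(i)}) - 1$. Summing these improved upper bounds over $i \in S(X)$ with the unchanged $m^{(i,j)}$ for each tangent pair and $1$ for each non-tangent pair will yield the inequality, and the irreducible case will follow immediately since $T(X) = NT(X) = \emptyset$ when $r = 1$.

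First, I would revisit the proof of Lemma \ref{ordroots} to extract the following strengthening of its statement: for a single singular branch, the plane $H^{(i)}_\theta = \langle w, v_\theta \rangle$ is determined by the integer $k_\theta = m^{(i)}_\theta$ alone. Indeed, the vector $v_\theta$ computed there equals the scalar multiple $(1 - \theta^{k_\theta})(0, a_{2, k_\theta}, \ldots, a_{n, k_\theta})$ of a vector depending only on $k_\theta$ and on the chosen Puiseux parametrization; since $1 - \theta^{k_\theta} \neq 0$ by construction of $k_\theta$, the span $\langle w, v_\theta \rangle$ is insensitive to this scalar. Consequently, the number of distinct planes $H^{(i)}_\theta$ is at most the cardinality of the set of distinct values $\{m^{(i)}_\theta : \theta \in G_{m^{(i)}} \setminus \{1\}\}$.

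Next, I would bound this cardinality by reducing to the plane case. Proposition \ref{propgeneric} gives $ChAM(X^{(i)},0) = ChAM(\tilde{X}^{(i)},0)$ for a $C_5$-generic plane projection $\tilde{X}^{(i)}$, and Lemma \ref{lemma1} identifies $ChAM(\tilde{X}^{(i)},0)$ with the set $\{m^{(i)}, \beta_1, \ldots, \beta_g\}$ of characteristic exponents of $\tilde{X}^{(i)}$. Hence exactly $g$ distinct values $m^{(i)}_\theta$ arise for $\theta \neq 1$. The classical construction $e_0 := m^{(i)}$ and $e_j := \gcd(e_{j-1}, \beta_j)$ then exhibits a strictly decreasing chain $m^{(i)} = e_0 > e_1 > \cdots > e_g = 1$ of nested divisors of $m^{(i)}$ of length $g+1$, whence $g \leq \sigma(m^{(i)}) - 1$ by the definition of $\sigma$ in Remark \ref{nested}.

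The main obstacle is the first step: one must verify carefully that $H^{(i)}_\theta$ is truly a function of $k_\theta$, which is genuinely sharper than the dependence on $ord(\theta)$ already recorded in Lemma \ref{ordroots}. The key observation is that the direction of $v_\theta$ is insensitive to the non-zero scalar $(1 - \theta^{k_\theta})$, so two elements of $G_{m^{(i)}}$ of \emph{different} orders but producing the same $k_\theta$ necessarily generate the same plane; without this sharpening one could only conclude a bound involving the number of divisors of $m^{(i)}$, which is in general strictly larger than $\sigma(m^{(i)}) - 1$.
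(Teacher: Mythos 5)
Your proposal is correct and follows essentially the same route as the paper, whose proof is a one-line citation of Corollary \ref{bound1}, Lemma \ref{ordroots}, Remark \ref{nested} and Proposition \ref{propgeneric}; you have simply made explicit the details those citations compress. In particular, your observation that $H^{(i)}_\theta$ depends only on $k_\theta=m^{(i)}_\theta$ (not merely on $ord(\theta)$) is exactly the reading of the proof of Lemma \ref{ordroots} needed to reach the bound $\sigma(m^{(i)})-1$ rather than the number of divisors of $m^{(i)}$.
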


\begin{proof} The proof follows by Corollary \ref{bound1}, Lemma \ref{ordroots}, Remark \ref{nested} and Proposition \ref{propgeneric}.\end{proof}

\begin{example}\label{exe101}
Consider the germ of curve $(X,0) \subset (\mathbb{C}^{200},0)$ parametrized by the following map:

\begin{center}
$\varphi(u):=(u^{2017}, \ u^{2018}, \ u^{2019},\cdots, \ u^{2216})$.
\end{center}

\noindent Note $2017$ is a prime number (unfortunately $2021$ is not). So the order of all elements $\theta \in G_{2017}\setminus \lbrace 1 \rbrace$ is $2017$. Hence, by Lemma \rm\ref{ordroots}, \textit{it is enough to test only one $\theta$ in $C_5$-procedure. In this case, $C_5(X,0)$ is the plane $x_1x_2$, that is, $C_5(X,0)=V(x_3,x_4,\cdots, x_{200})$, where $(x_1,x_2,\cdots,x_{200})$ are the local coordinates of $\mathbb{C}^{200}$. Note that without the use of Lemma} \rm\ref{bound2}, \textit{following Theorem} \rm\ref{generalcase1} \textit{we would need to use all the $2016$-roots of unity distinct from $1$.}
\end{example}

The previous example motivates the following corollary.

\begin{corollary}
If $(X,0)$ is irreducible and $m(X,0)$ is a prime number, then $C_5(X,0)$ is irreducible.
\end{corollary}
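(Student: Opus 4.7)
The plan is to derive the corollary directly from the machinery already assembled, namely Lemma \ref{ordroots} (or equivalently the irreducible case of Proposition \ref{bound2}). Since $m(X,0)=p$ is prime, the cyclic group $G_p$ has the property that every element $\theta \neq 1$ has order exactly $p$: indeed, $\mathrm{ord}(\theta)$ must divide $p$ and cannot be $1$.

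Next, I would invoke Lemma \ref{irreduciblecase} to write
\[
C_5(X,0) \ = \ \bigcup_{\theta \in G_p \setminus \{1\}} H_{\theta}.
\]
Because all of the $\theta \in G_p \setminus \{1\}$ share the common order $p$, Lemma \ref{ordroots} forces $H_{\theta_p} = H_{\theta_q}$ as complex vector spaces for any two such roots of unity. Hence the union above collapses to a single plane, which is automatically irreducible.

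Alternatively, one can quote Proposition \ref{bound2} directly: for a prime $p$, the only strictly decreasing sequence of nested divisors is $(p,1)$, so $\sigma(p)=2$ and the upper bound gives $\sharp\{\mathrm{Irred}(C_5(X,0))\} \leq \sigma(p)-1 = 1$. Since $p \geq 2$ the curve is singular, so by Theorem \ref{BGG} the $C_5$-cone is a nonempty finite union of planes and therefore contains at least one irreducible component. Combining the two bounds yields exactly one component. I do not foresee a real obstacle: the corollary is an immediate cyclic-group observation once Lemma \ref{ordroots} (or Proposition \ref{bound2}) is in hand, and the only minor point to verify is that $G_p \setminus \{1\}$ consists entirely of elements of order $p$, which is a standard fact about cyclic groups of prime order.
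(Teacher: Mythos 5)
Your proof is correct and follows exactly the route the paper intends: the corollary is stated as an immediate consequence of Lemma \ref{ordroots} (all $\theta \in G_p \setminus \{1\}$ have order $p$, so all planes $H_\theta$ coincide) combined with Lemma \ref{irreduciblecase}, and your observation that $p$ prime forces $m \geq 2$, hence singularity, is the only hypothesis check needed. The alternative argument via $\sigma(p)=2$ in Proposition \ref{bound2} is also valid and is essentially the same count packaged differently.
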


  To calculate the bound of proposition \ref{bound2} all you need to know are the multiplicities of the branches. However, for an irreducible curve $(X,0)$ 
  you can sharpen the bound if you know the set of characteristic auxiliary multiplicities $ChAM(X,0)$.
  
\begin{proposition}\label{bound3} \textbf{(Upper bound $3$)} Let $(X,0)=(X^1 \cup \cdots \cup X^r,0)$ be a germ of singular curve in $(\mathbb{C}^n,0)$. Then

\begin{center}
 $\sharp \lbrace Irred(C_5(X,0)) \rbrace \leq  \left(  \displaystyle { \sum_{i\in S(X)}^{}} \sharp \lbrace ChAM(X^{(i)},0) \rbrace -1 \right) +  \left( \displaystyle { \sum_{(i,j)\in T(X)}^{}} 
  \textrm{gcd}\left(m^{(i)}, m^{(j)} \right) \right) + \sharp \lbrace NT(X) \rbrace$.
\end{center}

\noindent In particular, if $(X,0)$ is irreducible, then $\sharp \lbrace Irred(C_5(X,0)) \rbrace \leq \sharp \lbrace ChAM(X,0) \rbrace -1 $.
\end{proposition}

   To prove this proposition it is convenient to recall the Lipschitz 
saturation $(X^S,0)$ of a branch. Let $O_{X,0}$ be the analytic 
algebra associated 
to the branch $(X,0)$. Algebraically, the Lipschitz saturation $O^S_{(X,0)}$
is the ring of meromorphic functions on $(X,0)$ which are locally 
Lipschitz with respect to the ambient metric; it is contained in 
the normalization $\overline{O_{X,0}} \cong \C\{u\}$.  The inclusion map:
   \[O_{X,0} \hookrightarrow O^S_{X,0}\]
has a geometric counterpart:   
\[ \zeta:(X^S,0) \to (X,0)\]
that can be realized as the restriction to $X^S$ of a $C_5$-generic 
linear projection, in the sense that its kernel is transversal to $C_5(X^S,0)$,
 see \cite[Prop. 8.5.20 \& Cor. 8.5.22]{GiSiTe}. The map $\zeta$ is then a 
bi-Lipschitz homeomorphism. Moreover, the analytic curve $(X^S,0)$ is 
a monomial curve that (determines and) is
   completely determined by $ChAM(X,0)=\{m,\beta_1,\ldots, \beta_g\}$.  It is isomorphic to the monomial curve with analytic algebra {\small $$O_{X,0}^S\cong \C\{u^m,u^{2m},\ldots ,u^{\beta_1},u^{\beta_1+e_1},\ldots,u^{\beta_2},u^{\beta_2+e_2},\ldots ,u^{\beta_3},\ldots ,u^{\beta_g},u^{\beta_g+1},\ldots\},$$}

   A fairly detailed account of all of this can be found in  \cite[Sec. 8.5.2]{GiSiTe}.  

 \begin{example}
    Let $(X,0) \subset (\C^2,0)$ be the plane branch with normalization map:
    \begin{align*}
         \eta:(\C,0) &\longrightarrow (X,0) \\
               u & \mapsto (u^4,u^6+u^7)
   \end{align*}
   Then $ChAM(X,0)=\{4,6,7\}$ and 
   \[O_{X,0}^S\cong \C\{ u^4,u^6,u^7,u^8,u^9,\cdots\} \]
   
  and so we have the normalization map for the Lipschitz saturation $(X^S,0) \subset (\C^4,0)$ given by:
   \begin{align*}
         \eta^s:(\C,0) &\longrightarrow (X^s,0) \\
               u & \mapsto (u^4,u^6,u^7,u^9).
   \end{align*}
   By making the change of coordinates in $(\C^4,0)$, $(x,y,z,w) \mapsto (x,y+z,z,w)$ we can view the Lipschitz saturation map 
   \[ \zeta: (X^s,0) \to (X,0)\]
   as the projection on the first two coordinates. Note that
   \[C_5(X^S,0)=V(z_3,z_4) \cup V(z_2,z_4).\]

 \end{example}
 
 \begin{proof} (Of proposition \ref{bound3}) \\
     Let $(X,0) \subset (\C^n,0)$ be an irreducible curve with $ChAM(X,0)=\{m,\beta_1,\ldots,\beta_g\}$. Since 
     {\small $$O_{X,0}^S\cong \C\{u^m,u^{2m},\ldots ,u^{\beta_1},u^{\beta_1+e_1},\ldots,u^{\beta_2},u^{\beta_2+e_2},\ldots ,u^{\beta_3},\ldots ,u^{\beta_g},u^{\beta_g+1},\ldots\},$$}
     we have that the normalization map for the Lipschitz saturation is of the form: 
     \begin{align*}
         \eta^s:(\C,0) &\longrightarrow (X^s,0)\subset (\C^N,0) \\
               u & \mapsto (u^m,u^{\beta_1}, \ldots, u^{\beta_g},u^{j_1}, \ldots, u^{j_s}) 
   \end{align*}
    where $j_i$ is of the form $\beta_k + re_k$ for some $k \in \{1,\ldots,g\}$. This implies that
    \[C_5(X^S,0) = V(z_3,\ldots,z_N) \cup V(z_2,z_4,\ldots,z_N) \cup \cdots \cup V(z_2,\ldots,z_{g-1},z_{g+1},\ldots, z_N) \]
   and so $$\sharp \lbrace Irred(C_5(X^S,0)) \rbrace = \sharp \lbrace ChAM(X,0) \rbrace -1. $$
   But since the saturation map  $\zeta:(X^S,0) \to (X,0)$ is a  bi-Lipschitz homeomorphism and a $C_5$-generic projection it induces 
   a surjective map
   \[ \zeta: C_5(X^S,0) \to C_5(X,0)\] 
   sending each irreducible component of $C_5(X^S,0)$ surjectively onto an irreducible component of $C_5(X,0)$. In particular
   $$\sharp \lbrace Irred(C_5(X,0)) \rbrace \leq \sharp \lbrace ChAM(X,0) \rbrace -1, $$ 
which implies the required inequality in Proposition \ref{bound3}
 \end{proof}

\section{The number of planes of $C_5(X,0)$ is not a bi-Lipschitz invariant}
\label{number}

$ \ \ \ \ $  In this section we would like to take a quick look at the behavior of the $C_5$-cone in a family of equisingular curves. We refer to \cite[Sec. 2]{otoniel6} for most of the concepts on equisingularity mentioned in this section.

Consider a flat family of reduced curves $p:(\mathfrak{X},0)\rightarrow (\mathbb{C},0)$ and let $p:\mathfrak{X}\rightarrow T$ be a good representative (see \cite[p. $248$]{buch}, see also \cite[Def. 2.2]{otoniel5}). We denote the fibers of $p$ by $X_t:=p^{-1}(t)$, $t \in T$.
Recall that when $p:\mathfrak{X}\rightarrow T$ is bi-Lipschitz equisingular, then for each $t$, there exist a bi-Lipschitz homeomorphism from $X_0$ to $X_t$. It has been proved that in bi-Lipschitz equisingular families of curves, the number of irreducible components of the $C_3$-cone is constant, see \cite{otoniel6} and also \cite{edson} for a more general situation. This implies that $C_3(X_0,0)$ and $C_3(X_t,0)$ are homeomorphic. It is thus natural to address the following:

\begin{flushleft}
\textbf{Question:} If $p:\mathfrak{X}\rightarrow T$ is a bi-Lipschitz equisingular family of reduced curves, are the cones $C_5(X_t,0)$, $t\neq 0$, and $C_5(X_0,0)$ homeomorphic?
\end{flushleft}

Let us first remark that this question can also be formulated for a pair of curves. Consider for instance the curve $(X,0)$ of Example \ref{exe3}. By Theorem \ref{bilipequivalence} and Proposition \ref{propgeneric} $(X,0)$ and its $C_5$-generic projection $(\tilde{X},0)$ are bi-Lipschitz equivalent. The cones $C_5(X,0)$ and $C_5(\tilde{X},0)$ are not homeomorphic, since $C_5(X,0)$ is composed of two (distinct) planes, while $C_5(\tilde{X},0)$ is the plane that contains $(\tilde{X},0)$. 

We will see in the following example that the answer to this question is negative even for families of reduced curves. 

\begin{example}\label{exeplanos} Consider the germ of reduced analytic surface $(\mathfrak{X},0)$ in $(\mathbb{C}^4,0)$ given as the image of the map germ $\varphi:(\mathbb{C}^2,0)\rightarrow (\mathfrak{X},0)$, $\varphi(u,t)=(\varphi_t(u),t)$, where $\varphi_t(u)$ is defined as

\begin{equation}\label{eq7}
\varphi_t(u)=(u^{6},u^{9}+u^{10},u^{11}+tu^{10}).
\end{equation}

Consider the canonical projection $p:(\mathbb{C}^4,0)\rightarrow (\mathbb{C},0)$ to the last factor, where $x,y,z,t$ are local system of coordinates in $\mathbb{C}^4$. Using {\sc Singular} \rm\cite{singular}, \textit{one can find the reduced structure of $(\mathfrak{X},0)$ and check that it is a Cohen-Macaulay surface and its singular locus is the $t$-axis.}

\textit{Consider the restriction of the projection to a good representative $p:\mathfrak{X} \rightarrow T$. It is a bi-Lipschitz equisingular family of reduced curves. In fact,  note that $\varphi^{-1}((0,0,0,t))=(0,t)$ for all $t \in T$, since $\varphi$ is the normalization of $\mathfrak{X}$ then $X_t$ is irreducible for all $t\in T$. Since $\mathfrak{X}$ is Cohen-Macaulay, then $X_t$ is reduced for all $t$. Therefore, $\varphi_t$ is a Puiseux parametrization of $X_t$. Note that $ChAM(X_t,(0,0,0,t))=\lbrace 6,9,10 \rbrace$ for all $t \in T$, thus by Proposition} \rm\ref{propgeneric} \textit{the family of $C_5$-generic plane projections $\tilde{X}_t$ is a topologically trivial family of plane curves, the bi-Lipschitz equisingularity of $p:\mathfrak{X}\rightarrow T$ follows from} \rm\cite[Cor. 3.6]{otoniel6}.

\textit{Now note that for $t \neq 0$ $C_5(X_t,(0,0,0,t))$ is composed of two (distinct) planes, while $C_5(X_0,0)$ is only a plane. We conclude that the number of irreducible components of the $C_5$-cone is not a bi-Lipschitz invariant, not even in family.}   
\end{example}

We finish with a remark on the analytic equivalence between curves.

\begin{remark}

The $C_5$-cone of an arbitrary analytic set is invariant under biholomorphic transformations (see \rm\cite{chirka}, \textit{p.} \rm 92). \textit{In the case of curves, the number of planes of $C_5$ is then an analytic invariant. In this sense, we can use this criterion to construct curves which are bi-Lipschitz equivalent but are not analytic equivalent. For instance, Table }\rm\ref{tabela2} \textit{shows four curves which are two by two bi-Lipschitz equivalent, each of them with a different number of planes in its $C_5$-cone. Then, these four curves have distinct analytic types.}
\end{remark}

\begin{table}[!h]
\caption{Distinct analytic types of bi-Lipschitz equivalent germs of curves in $(\mathbb{C}^3,0)$}\label{tabela2}
\centering 
{\def\arraystretch{2}\tabcolsep=10pt
\begin{tabular}{@{}l | l | c | l @{}}

\hline			
Name &  Parametrization of $(X^{(i)},0)$ & $\sharp \lbrace Irred(C_5(X^{(i)},0)) \rbrace$ & Equation of $C_5(X^{(i)},0)$ \\
  
\hline  
$(X^{(1)},0)$ &  $(u^{16},u^{57},u^{24}+u^{36}+u^{54}+u^{55})$ & $1$ & $V(y)$ \\

$(X^{(2)},0)$ &  $(u^{16},u^{24}+u^{57},u^{36}+u^{54}+u^{55})$ & $2$ & $V(yz)$ \\

$(X^{(3)},0)$ &  $(u^{16},u^{24}+u^{36}+u^{57},u^{36}+u^{54}+u^{55})$ & $3$ & $V(yz(y-z))$ \\

$(X^{(4)},0)$ &    $(u^{16},u^{24}+u^{36}-u^{54}+u^{57},u^{36}+u^{54}+u^{55})$ & $4$ & $V(yz(y-z)(y+z))$ \\
  \hline 
\end{tabular}
}
\end{table}

\begin{flushleft}
\textit{Acknowledgements:} The authors would like to thank Tadeusz 
Krasi\'nski for showing them his paper \cite{Krasinsky}. A. Giles Flores acknowledges support by Conacyt grant 221635. O.N. Silva acknowledges support by São Paulo Research Foundation (FAPESP), grant 2020/10888-2. J. Snoussi acknowledges support by Conacyt grant 282937.
\end{flushleft}


\normalsize

\begin{center}

\end{center}

\begin{flushleft}
$\bullet$ Giles Flores, A.\\
\textit{arturo.giles@cimat.mx}\\
Universidad Aut\'onoma de Aguascalientes, Departamento de Matemáticas y Física, Aguascalientes, México.\\

$ \ \ $\\

$\bullet$ Silva, O.N.\\
\textit{otoniel@dm.ufscar.br}\\
Universidade Federal de São Carlos, Caixa Postal 676, 13560-905 São Carlos, SP, Brazil.\\

$ \ \ $\\

$\bullet$ Snoussi, J.\\
\textit{jsnoussi@im.unam.mx}\\
Universidad Nacional Aut\'onoma de M\'exico (UNAM), 
Instituto de Matem\'aticas, Unidad Cuernavaca, M\'exico.\\
\end{flushleft}

\end{document}